\newcommand{\G}{\mathbb{G}}
\renewcommand{\P}{\mathbb{P}}
\newcommand{\Q}{\mathbb{Q}}
\newcommand{\Z}{\mathbb{Z}}
\newcommand{\R}{\mathbb{R}}
\newcommand{\C}{\mathbb{C}}
\newcommand{\sE}{\mathcal{E}}
\newcommand{\Coker}{\operatorname{Coker}}
\newcommand{\Pic}{\operatorname{Pic}}
\newcommand{\Tor}{\operatorname{Tor}}
\newcommand{\Div}{\operatorname{Div}}
\newcommand{\ord}{\operatorname{ord}}
\newcommand{\Res}{\operatorname{Res}}
\newcommand{\Gal}{\operatorname{Gal}}
\newcommand{\ol}{\overline}
\newcommand{\sC}{\mathscr{C}}
\newcommand{\sD}{\mathscr{D}}
\newcommand{\sP}{\mathscr{P}}
\newcommand{\CC}{\mathscr{C}}
\newcommand{\DD}{\mathscr{D}}
\newcommand{\PP}{\mathscr{P}}
\def\SL{\mathrm{SL}}
\def\M#1#2#3#4{\begin{pmatrix}#1&#2\\#3&#4\end{pmatrix}}
\def\SM#1#2#3#4{\left(\begin{smallmatrix}#1&#2\\#3&#4\end{smallmatrix}\right)}
\renewcommand{\epsilon}{\varepsilon}
\renewcommand{\div}{\operatorname{div}}
\newtheorem{lemma}{Lemma}[subsection]
\newtheorem{theorem}[lemma]{Theorem}
\newtheorem{proposition}[lemma]{Proposition}
\newtheorem{Proposition}[lemma]{Proposition}
\newtheorem{Lemma}[lemma]{Lemma}
\newtheorem{corollary}[lemma]{Corollary}
\newtheorem{Corollary}[lemma]{Corollary}
\theoremstyle{remark}
\newtheorem{remark}[lemma]{Remark}
\newtheorem{Remark}[lemma]{Remark}
\numberwithin{equation}{subsection}
\begin{document}
\title[Rational torsion on the generalized Jacobians]
{Rational torsion on the generalized Jacobian \\
of a modular curve with cuspidal modulus}

\author{Takao Yamazaki \and Yifan Yang}
\date{\today}
\address{Mathematical Institute, Tohoku University,
Aoba, Sendai 980-8578, Japan}
\email{ytakao@math.tohoku.ac.jp}
\address{Department of Applied Mathematics, 
National Chiao Tung University, Hsinchu 300, Taiwan}
\email{yfyang@math.nctu.edu.tw}

\begin{abstract}
We consider the generalized Jacobian $\widetilde{J}_0(N)$
of a modular curve $X_0(N)$ with respect to
a reduced divisor given by the sum of all cusps on it.
When $N$ is a power of a prime $\geq 5$,
we exhibit that the group of
rational torsion points $\widetilde{J}_0(N)(\Q)_{\Tor}$
tends to be much smaller than the classical Jacobian.
\end{abstract}

\keywords{Generalized Jacobian, torsion points, modular units, cuspidal divisor class}
\subjclass[2010]{14H40 (11G16, 11F03, 14G35)}
\thanks{The first author is supported by JSPS KAKENHI Grant
  (15K04773). The second author is supported by Grant
  102-2115-M-009-001-MY4 of the Ministry of Science and Technology,
  Taiwan (R.O.C.)}
\maketitle

\section{Introduction}

\subsection{}\label{sect:starter}
Let $N$ be a natural number 
and let $X_0(N)$ be the modular curve 
with respect to 
$\Gamma_0(N) = 
\{ (\begin{smallmatrix} a & b \\ c & d \end{smallmatrix})
\in SL_2(\Z)
\mid c \equiv 0 \bmod N \}$,
which we regard as a smooth projective curve over $\Q$.
Its Jacobian variety $J_0(N)$ is an important object in arithmetic geometry
and is intensively studied by many authors.
By the Mordell-Weil theorem, the group $J_0(N)(\Q)$ of $\Q$-rational
points on $J_0(N)$ is finitely generated, and hence its torsion subgroup 
$J_0(N)(\Q)_{\Tor}$ is finite. The torsion subgroup contains an
important subgroup $\CC(N)$ generated by classes of $\Q$-rational
divisors of degree $0$ with support on cusps of $X_0(N)$, called the
\emph{$\Q$-rational cuspidal divisor class group}. (By Manin's theorem
\cite{Manin}, divisors with support on cusps are of finite order in
$J_0(N)$.) Ogg \cite{Ogg}
conjectured and later Mazur \cite{Mazur} proved that when $N=p$ is a
prime, the two groups $\CC(p)$ and $J_0(p)(\Q)_{\Tor}$ coincide and
are cyclic of order $(p-1)/(p-1,12)$. For general cases, it is still an open
problem whether the two groups are equal, although the works of
Lorenzini \cite{Lorenzini} and Ling \cite{Ling} have given a partially
affirmative answer to the problem. We summarize the results mentioned
above in the theorem below.
Here for $m \in \Z_{>0}$, we say two abelian groups are isomorphic 
{\it up to $m$-torsion} if they become isomorphic
after tensoring with $\Z[1/m]$.

\begin{theorem}\label{thm:c-and-j}
Let $p$ be a prime number and set
$a:=(p-1)/(p-1, 12), ~b:=(p+1)/(p+1, 12)$.
Let $n$ be a positive integer.
\begin{enumerate}
\item
If $N=p$, then $J_0(p)(\Q)_{\Tor}=\CC(p)$ and
is a cyclic group of order $a$.
(Mazur \cite[Theorem 1]{Mazur}.)
\item
Suppose $p \not\equiv 11 \bmod 12$. If $p \geq 5$ and $N=p^n$, then
the three groups $J_0(p^n)(\Q)_{\Tor}$, $\CC(p^n)$, and $(\Z/a\Z)^n
\times (\Z/b\Z)^{n-1}$ are isomorphic up to $2p$-torsion.
(Lorenzini \cite[Theorem 4.6]{Lorenzini}.)
\item
The previous statement (2) holds
without the assumption $p \not\equiv 11 \bmod 12$
but up to $6p$-torsion
(Ling \cite[Theorem 4]{Ling}).
\item Assume that $p\ge 5$. If $n$ is even, then
$$
  \CC(p^n)\simeq(\Z/a\Z)^n\times(\Z/b\Z)^{n-1}\times
  \prod_{i=n/2}^{n-2}\Z/p^i\Z\times
  \prod_{i=(n/2)+1}^{n-1}\Z/p^i\Z.
$$
If $n$ is odd, then
$$
  \CC(p^n)\simeq(\Z/a\Z)^n\times(\Z/b\Z)^{n-1}\times
  \prod_{i=(n+1)/2}^{n-2}\Z/p^i\Z\times
  \prod_{i=(n+1)/2}^{n-1}\Z/p^i\Z.
$$
In particular, the order of $\CC(p^n)$ is $a^nb^{n-1}p^{k_n}$, where
\begin{equation*}
  k_n=\begin{cases}
    (n-2)(3n-2)/4, &\text{if }n\text{ is even}, \\
    (n-1)(3n-5)/4, &\text{if }n\text{ is odd}. \end{cases}
\end{equation*}
(Ling \cite[Theorem 1]{Ling}.)
\end{enumerate}
\end{theorem}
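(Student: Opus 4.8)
The plan is to realise $\CC(p^n)$ as the cokernel of an explicit integer matrix and then to compute its Smith normal form. First I would fix the lattice of $\Q$-rational cuspidal divisors. A cusp of $X_0(p^n)$ of denominator $p^i$ (with $0\le i\le n$) is recorded by a numerator $a$ taken modulo $p^{\min(i,n-i)}$, so there are $\phi(p^{\min(i,n-i)})$ cusps above each level $i$. The group $\Gal(\Q(\zeta_{p^n})/\Q)\cong(\Z/p^n\Z)^\times$ acts on the numerators through multiplication by squares; hence the levels $i=0,n$ give one rational cusp apiece, while each interior level $0<i<n$ breaks into exactly two Galois orbits, distinguished by the quadratic residue class of $a$. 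Summing over each orbit gives a basis of the lattice $\mathcal{D}$ of $\Q$-rational cuspidal divisors, of rank $2n$; let $\mathcal{D}^0\subset\mathcal{D}$ be its degree-zero sublattice, of rank $2n-1$. Since cuspidal divisors are torsion in $J_0(p^n)$ by Manin's theorem, one has $\CC(p^n)=\mathcal{D}^0/\mathcal{F}$, where $\mathcal{F}$ is the (finite-index) lattice of divisors of $\Q$-rational modular units.

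Next I would produce enough modular units to span $\mathcal{F}$ and compute their divisors. The eta-quotients $\prod_{i=0}^n\eta(p^i\tau)^{r_i}$ satisfying the Ligozat congruences are units on $X_0(p^n)$, and by Ligozat's formula the order of such a unit at $a/p^i$ depends only on the level $i$; thus their divisors lie in the rank-$(n+1)$ level-symmetric sublattice on which the two orbits over each interior level carry equal weight. To reach the complementary antisymmetric directions---the differences of the two orbits over the interior levels---I would bring in Siegel (Klein-form) units, whose orders separate the two residue classes. Together these span a finite-index subgroup of $\mathcal{F}$ which, by the Kubert--Lang description of the modular units on $X_0(p^n)$, is all of $\mathcal{F}$. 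Recording the orders in a matrix $M$ then gives $\CC(p^n)=\Coker M$.

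The core of the argument, and its main obstacle, is the Smith normal form of $M$. The Atkin--Lehner involution $w_{p^n}$, which sends level $i$ to $n-i$, together with the splitting of the interior levels, lets me decompose $\mathcal{D}^0$ (up to $2$-torsion) as $\mathcal{D}^{+}\oplus\mathcal{D}^{-}$: a level-symmetric part $\mathcal{D}^{+}$ of rank $n$ spanned by the full level-sums, and an antisymmetric part $\mathcal{D}^{-}$ of rank $n-1$ spanned by the orbit-differences. The eta-quotient divisors live in $\mathcal{D}^{+}$, and the entries of their order matrix involve the quantity $(p^2-1)/24=(p-1)(p+1)/24$; isolating the $\pm$-eigenspaces of $w_{p^n}$ separates the factors $p-1$ and $p+1$ and yields the prime-to-$p$ part $(\Z/a\Z)^n\times(\Z/b\Z)^{n-1}$, extending Mazur's computation of the single class $(0)-(\infty)$ of order $a$ on $X_0(p)$. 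The antisymmetric part $\mathcal{D}^{-}$, controlled by the Siegel units, carries all of the $p$-power torsion; tracking the $p$-adic valuations of its order matrix through the reduction produces the two telescoping products $\prod\Z/p^i\Z$. Their index ranges differ according to whether the tower has a single self-paired middle level ($n$ even, at $i=n/2$) or a symmetric pair of middle levels of equal depth ($n$ odd), which is exactly the source of the even/odd dichotomy in the statement.

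Finally I would read off the order by multiplying the elementary divisors: the prime-to-$p$ part contributes $a^nb^{n-1}$ and the $p$-part contributes $p^{k_n}$ with $k_n=\sum i$ over the two index ranges, summing to $(n-2)(3n-2)/4$ for $n$ even and $(n-1)(3n-5)/4$ for $n$ odd. The delicate step is not the bookkeeping of $a$ and $b$ but the control of the $p$-part: one must verify that the Siegel-unit relations generate \emph{exactly} $\mathcal{D}^{-}\cap\mathcal{F}$, since an error of even a single power of $p$ would propagate through the telescoping sums and corrupt $k_n$. The natural device for pinning this down is an induction along the two degeneracy maps $X_0(p^n)\to X_0(p^{n-1})$, comparing the cuspidal lattices and the unit groups at consecutive levels.
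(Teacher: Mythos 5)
First, a point of comparison: the paper does not prove this theorem at all --- it is a summary of results of Mazur, Lorenzini and Ling, quoted with citations, and the paper's own Proposition~\ref{proposition: generators of P} in fact uses part~(4) as an \emph{input} (the lattice-index computation of \S\ref{sect:pf-gen} is checked against Ling's class number in order to conclude that the eta-quotients generate $\PP(p^n)$). Your proposal must therefore stand on its own, and it does not, because its very first step misdescribes the Galois action on the cusps. You assert that $\Gal(\Q(\zeta_{p^n})/\Q)$ acts on the numerators through multiplication by \emph{squares}, so that each interior level $0<i<n$ breaks into two Galois orbits. This is false for $X_0(p^n)$: the action on the cusps $a/p^i$ of level $p^i$ is through the full group $(\Z/p^{d(i)}\Z)^\times$, $d(i)=\min(i,n-i)$, and is simply transitive, so each level carries exactly \emph{one} closed point $Q_{p^i}$, of degree $\phi(p^{d(i)})$ and with residue field the full cyclotomic field $\Q(\zeta_{p^{d(i)}})$. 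This is the standard fact (Ogg, Ligozat) recalled in \S\ref{sect:cusps}, and it is the very reason the map \eqref{eq:delta0} has the fields $\Q(\mu_{p^{d(i)}})$ in its target; under your description the residue fields would instead be index-two subfields --- for instance the four cusps of level $5$ on $X_0(25)$ would form two closed points with residue field $\Q(\sqrt{5})$, whereas they form a single closed point with residue field $\Q(\zeta_5)$. Consequently the lattice of $\Q$-rational cuspidal divisors has rank $n+1$, not $2n$, its degree-zero part has rank $n$, not $2n-1$, your antisymmetric lattice $\mathcal{D}^-$ is zero, and everything you hang on it --- the Siegel units introduced to span it, the claim that it carries all the $p$-power torsion, the explanation of the even/odd dichotomy through its middle levels --- is vacuous.

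The collapse is not merely cosmetic, because your accounting of the torsion is wrong even on the part of the lattice that does exist. The quotient of the rank-$n$ degree-zero lattice spanned by the $Q_{p^i}$ by the sublattice of eta-quotient divisors is the \emph{whole} group $\CC(p^n)$, and it already contains all of the $p$-power torsion $p^{k_n}$ (for $n=3$ its order is $a^3b^2p^2$); it is not the prime-to-$p$ group $(\Z/a\Z)^n\times(\Z/b\Z)^{n-1}$ that you claim to extract from the ``symmetric part''. This is precisely what the determinant of the matrix $M$ of eta-orders computed in \S\ref{sect:pf-gen} exhibits. Moreover, the step you yourself flag as delicate --- that the unit divisors you write down generate \emph{exactly} the lattice of divisors of modular units, with not even one stray power of $p$ --- is essentially the content of Ling's theorem, and your appeal to Kubert--Lang (a theory developed for $X(N)$ and $X_1(N)$, whose transfer to $X_0$ of non-squarefree level is exactly the issue) plus an unspecified induction along degeneracy maps does not supply it; the paper sidesteps this by \emph{quoting} Ling's order formula and deducing the generation statement from it, a route unavailable to you since that formula is what you are trying to prove. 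Finally, your proposal only ever addresses $\CC(p^n)$, i.e.\ part~(4): parts (1)--(3), which identify $J_0(p^n)(\Q)_{\Tor}$ with $\CC(p^n)$ exactly (Mazur) or up to $2p$- or $6p$-torsion (Lorenzini, Ling), rest on the Eisenstein ideal and on component groups of N\'eron models, and cannot be reached by any computation inside the cuspidal lattice.
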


\begin{remark}\label{rem:conj}
Recently, Ohta \cite{Ohta} proved that
$\sC(N)$ and $J_0(N)(\Q)_{\Tor}$ are isomorphic up to $2$-torsion
when $N$ is the product of distinct odd primes.
\end{remark}

Let $C_0(N)$ be the closed subset of $X_0(N)$ consisting of all cusps.
We regard $C_0(N)$ as an effective reduced divisor on $X_0(N)$. In
this paper, we consider the \emph{generalized Jacobian} $\widetilde{J}_0(N)$
of $X_0(N)$ with modulus $C_0(N)$ in the sense of Rosenlicht-Serre
\cite{Serre}. It should be as important as $J_0(N)$ in arithmetic
geometry of modular curves, but somehow $\widetilde{J}_0(N)$ has not
been studied much. We are interested in the group of $\Q$-rational
points $\widetilde{J}_0(N)(\Q)$ on $\widetilde{J}_0(N)$.
Although it is not finitely generated (unless $N=1$),
its torsion subgroup $\widetilde{J}_0(N)(\Q)_{\Tor}$ is finite.
In this paper we observe that 
$\widetilde{J}_0(N)(\Q)_{\Tor}$ is unexpectedly
smaller than $J_0(N)(\Q)_{\Tor}$
by proving the following result,
which shows a sharp contrast with Theorem \ref{thm:c-and-j}. (For
example, Mazur's theorem shows that the cardinality of
$J_0(p)(\Q)_{\Tor}$ grows linearly as $p$ increases, but on the
contrary, our result shows that the cardinality of $\widetilde
J_0(p)_{\Tor}$ remains the same for all $p$.)

\begin{theorem}\label{thm:intro}
Let $p$ be a prime number
and $n$ be a positive integer.
\begin{enumerate}
\item
If $N=p$, then $\widetilde{J}_0(p)(\Q)_{\Tor}$
is a cyclic group of order $2$.
\item
Suppose $p \not\equiv 11 \bmod 12$.
If $p \geq 5$ and $N=p^n$, then 
$\widetilde{J}_0(p^n)(\Q)_{\Tor}$ is isomorphic to 
the trivial group up to $2p$-torsion.
\item
The previous statement (2) holds
without the assumption $p \not\equiv 11 \bmod 12$
but up to $6p$-torsion.
\item Assume that $p\ge 5$. Suppose that the conjecture
  $J_0(p^n)(\Q)_{\Tor}=\CC(p^n)$ is true. If $n$ is even, then 
$$
  \widetilde J_0(p^n)(\Q)_{\Tor}\simeq
  \prod_{i=0}^{(n/2)-1}\Z/(2p^i\Z)\times\prod_{i=1}^{n/2}\Z/(2p^i\Z).
$$
If $n$ is odd, then
$$
  \widetilde J_0(p^n)(\Q)_{\Tor}\simeq
  \prod_{i=0}^{(n-1)/2}\Z/(2p^i\Z)\times\prod_{i=1}^{(n-1)/2}\Z/(2p^i\Z).
$$
\end{enumerate}
\end{theorem}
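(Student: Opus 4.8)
The plan is to relate the generalized Jacobian $\widetilde{J}_0(N)$ to the ordinary Jacobian $J_0(N)$ through the standard Rosenlicht--Serre exact sequence. For a smooth projective curve $X$ over $\Q$ with modulus given by a reduced divisor $C = C_0(N)$ supported on the cusps, there is an exact sequence of algebraic groups
\begin{equation*}
0 \to L \to \widetilde{J}_0(N) \to J_0(N) \to 0,
\end{equation*}
where the ``local'' part $L$ is a torus whose character group is identified with the group of $\Q$-rational divisors of degree $0$ supported on $C_0(N)$, modulo the subgroup generated by the single relation coming from the whole divisor $C$. Concretely, over $\Q$ one has $L \simeq (\G_m)^{c-1}$ where $c$ is the number of $\Q$-rational cusps (with a suitable Galois twist when cusps are not individually $\Q$-rational). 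Taking $\Q$-points and passing to the long exact sequence of Galois cohomology, I would extract the torsion. The first step, then, is to write down this sequence carefully, identify $L$ together with its Galois action, and compute $L(\Q)_{\Tor} = (\Q^\times)^{c-1}_{\Tor}$, which is $2$-torsion: each $\G_m$ factor contributes exactly $\{\pm 1\}$.

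The second step is to understand how torsion propagates in the sequence $0 \to L(\Q) \to \widetilde{J}_0(N)(\Q) \to J_0(N)(\Q) \to H^1(\Q, L) \to \cdots$. The key observation is that the cuspidal divisor class group $\CC(N)$, which governs the torsion of $J_0(N)(\Q)$ by Theorem~\ref{thm:c-and-j}, is \emph{precisely} the image in $J_0(N)$ of divisors supported on cusps; these are exactly the classes that lift to the torus $L$. Thus the connecting behaviour between $\CC(N)$ and $L(\Q)$ must be made explicit: I expect that most of $\CC(N)$ fails to lift to $\widetilde{J}_0(N)(\Q)_{\Tor}$ because a cuspidal divisor of degree $0$ that is principal on $X_0(N)$ need not be the divisor of a \emph{unit} with prescribed behaviour along $C_0(N)$. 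In other words, the modular units — rational functions whose divisors are supported on cusps — control the kernel, and the relevant quotient is measured by the group of modular units modulo those that are suitably normalized along the modulus. This reciprocity between modular units and cuspidal divisor classes is where the explicit structure of Theorem~\ref{thm:c-and-j}(4) and the conjecture $J_0(p^n)(\Q)_{\Tor} = \CC(p^n)$ enter.

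For part (1) with $N = p$, there are exactly two cusps, both $\Q$-rational, so $L \simeq \G_m$ and $L(\Q)_{\Tor} = \{\pm 1\}$ has order $2$; since $J_0(p)(\Q)_{\Tor} = \CC(p)$ by Mazur and every cuspidal class is a multiple of the difference of the two cusps, I would check that no nontrivial element of $\CC(p)$ lifts to a torsion point upstairs, yielding the cyclic group of order $2$. Parts (2) and (3) then follow by the same mechanism after inverting $2p$ (resp.\ $6p$): Theorem~\ref{thm:c-and-j}(2),(3) identify $J_0(p^n)(\Q)_{\Tor}$ with an explicit group up to $2p$- (resp.\ $6p$-)torsion, and I would show that the prime-to-$2p$ part of this group is exactly cancelled by the map into $H^1(\Q,L)$ or absorbed into the torus, leaving the trivial group up to the allowed torsion. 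For part (4), assuming the conjecture, I would combine the explicit decomposition of $\CC(p^n)$ in Theorem~\ref{thm:c-and-j}(4) with the precise computation of the $\G_m$-factors of $L$ and the lifting obstruction, bookkeeping the $p$-power cyclic factors against the two families of cusps (the $(n+1)$ and $(n-1)$ orbits, reflecting the two products in the stated formulas) to read off the asserted product decomposition.

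The main obstacle will be the second step: making the lifting/reciprocity map between $\CC(N)$ and the torus $L$ genuinely explicit, i.e.\ determining exactly which cuspidal divisor classes in $J_0(p^n)$ come from torsion points of $\widetilde{J}_0(p^n)$. This requires an explicit description of the divisors of modular units on $X_0(p^n)$ together with their orders (via the Siegel/Klein units or the $q$-expansion valuations at each cusp), and a careful analysis of the pairing between the character lattice of $L$ and the cuspidal classes. The two-torsion subtleties — the factors $\Z/2p^i\Z$ rather than $\Z/p^i\Z$ in part~(4), and the appearance of order $2$ in part~(1) — suggest that keeping track of signs in $L(\Q) = (\Q^\times)^{c-1}$ is delicate and is the most error-prone part of the argument; isolating the $2$-primary contribution of the torus from the odd part coming from $\CC(p^n)$ is where I would spend the most care.
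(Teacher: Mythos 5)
Your overall skeleton---the Rosenlicht--Serre exact sequence, reduction to the question of which cuspidal classes lift to torsion on the generalized Jacobian, and explicit modular units as the controlling tool---is also the paper's skeleton, but there are two genuine problems. First, your computation of the torsion of the torus is wrong, and the error is fatal for part (4). For $N=p^n$ with $n\ge 2$ the cusps of level $p^i$, $0<i<n$, are \emph{not} $\Q$-rational: their residue fields are $\Q(\mu_{p^{d(i)}})$ with $d(i)=\min(i,n-i)$. The local part is $L=\bigl(\bigoplus_{i=0}^n\Res_{\Q(P_i)/\Q}\G_m\bigr)/\G_m$, so $L(\Q)\cong\bigoplus_{i=0}^{n-1}\Q(\mu_{p^{d(i)}})^\times$ and $L(\Q)_{\Tor}=\bigoplus_{i=0}^{n-1}\mu_{2p^{d(i)}}$, which has a large $p$-part---not $\{\pm1\}$ in each factor as you assert. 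Consequently your plan for part (4), namely to produce the $p$-power cyclic factors by letting part of the $p$-part of $\CC(p^n)$ survive the lifting, is backwards: the paper's main technical result (Theorem \ref{thm:main}) says the map $\delta$ of \eqref{eq:ex-seq1} is injective on all of $\CC(p^n)$, so under the conjecture \emph{nothing} of $J_0(p^n)(\Q)_{\Tor}$ survives, and $\widetilde J_0(p^n)(\Q)_{\Tor}$ is exactly $\bigoplus_{i=0}^{n-1}\mu_{2p^{d(i)}}$; it is these cyclotomic roots of unity that supply the factors $\Z/2p^i\Z$. A sanity check that your bookkeeping cannot work: for $n=2$ the $p$-part of $\CC(p^2)$ is trivial (Theorem \ref{thm:c-and-j}(4) gives $k_2=0$), yet the asserted answer contains a factor $\Z/2p\Z$.

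Second, the step you defer---``no nontrivial element of $\CC(p^n)$ lifts to a torsion point upstairs''---is not a detail to be filled in later; it is the entire content of the paper, and it is not a soft statement. It genuinely fails at non-prime-power level: Proposition \ref{prop:intro-pq} shows that for $N=pq$ with $p\equiv q\equiv 1\bmod 12$ the kernel of $\delta$ on $\CC(pq)$ is cyclic of order $(p-1)(q-1)/24$. So any correct argument must exploit level $p^n$ specifically, and the $q$-expansion \emph{valuations} of modular units you mention are insufficient: one needs the actual leading Fourier coefficients at every cusp as elements of the residue fields. The paper does this by producing explicit eta-quotient generators $f,g_k$ of $\PP(p^n)$ (Proposition \ref{proposition: generators of P}, itself requiring a lattice-index/determinant computation against Ling's class number formula), computing their leading coefficients at all cusps via the eta transformation formula (Proposition \ref{prop:leading-coeff}), observing that modulo roots of unity these coefficients lie in the group generated by $p$ and $\sqrt{p^\ast}$, proving that $x\mapsto\sqrt{p^\ast}\otimes x$ is injective into $\Q(\zeta_{p^m})^\times\otimes\Q/\Z$ (Lemma \ref{lem:def-s}, a Galois-cohomology computation), and concluding with a matrix argument (Corollary \ref{cor:gen} and Lemma \ref{eq:lemma-final}). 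None of this is present in, or follows formally from, your outline.
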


This result is actually a consequence of our main theorem
(Theorem \ref{thm:main}) below. However, before we state our main
result, let us pause here to recall some basic facts
about generalized Jacobian (cf. \cite{Serre}).


\subsection{}\label{sect:as-small}
Let $C$ be a smooth projective geometrically connected curve over a field $k$,
and $J$ the Jacobian variety of $C$.
We give ourselves distinct closed points $P_0, \dots, P_n \in C$.
We assume that $P_n$ is a $k$-rational point.
We consider the generalized Jacobian 
 $\widetilde{J}$ of $C$ with modulus $D=P_0+\dots+P_n$.
There is an exact sequence
\[ 0 \to \G_m 
\to \bigoplus_{i=0}^n \Res_{k(P_i)/k} \G_m \to \widetilde{J} \to J \to 0 
\]
of commutative algebraic groups over $k$.
Here $\Res_{k(P_i)/k}$ denotes the Weil restriction.
We have $\Res_{k(P_n)/k} \G_m=\G_m$ since $P_n$ is a $k$-rational point.
As we have $H^1(k, \Res_{k(P_i)/k} \G_m)=0$ 
(by Hilbert 90 and Szpiro's lemma), 
it induces exact sequences of abelian groups 
\begin{equation}\label{eq:ex-seq0}
 0 \to \bigoplus_{i=0}^{n-1} k(P_i)^\times
\overset{\iota}{\to} \widetilde{J}(k) \to J(k) \to 0 
\end{equation}
and
\begin{equation}\label{eq:ex-seq1}
0 \to \bigoplus_{i=0}^{n-1} \mu(k(P_i)) 
\to 
\widetilde{J}(k)_{\Tor} 
\overset{\rho}{\to} 
J(k)_{\Tor}
\overset{\delta}{\to} 
\bigoplus_{i=0}^{n-1} k(P_i)^\times \otimes \Q/\Z,
\end{equation}
where we denote by
$\mu(F)$ the group of all roots of unity in $F$ for a field $F$.



\begin{remark}
Consider an effective divisor $D'$ which has the same support as $D$
(that is, $D' = \sum_{i=0}^n a_i P_i$ with $a_i \in \Z_{>0}$).
One can consider the generalized Jacobian $J'$ 
of $C$ with modulus $D'$.
Then there is a canonical surjection $J' \to \widetilde{J}$
whose kernel is unipotent.
In particular, 
when $k$ is of characteristic zero, 
we have an isomorphism 
$J'(k)_{\Tor} \to \widetilde{J}(k)_{\Tor}$
and hence there is nothing new in our problem.
\end{remark}

\subsection{}\label{sect:cuspidal}
We return to the setting in \S \ref{sect:starter}. 
Let $p$ be a prime number and let $n$ be a positive integer.
Then $C_0(p^n)$ consists of $n+1$ points $P_0, \dots, P_n$,
which we will arrange in such a way that the residue field $\Q(P_i)$
of $P_i$ is the cyclotomic field $\Q(\mu_{p^{d(i)}})$
of degree $p^{d(i)}$ with $d(i):=\min(i, n-i)$ for each $i=0, \dots,
n$. (See \S \ref{sect:cusps} for more details.)
In particular, $P_0$ and $P_n$ are $\Q$-rational. Then the map
$\delta$ in \eqref{eq:ex-seq1} for $(k, C, D)=(\Q, X_0(p^n), C_0(p^n))$
reads
\begin{equation}\label{eq:delta0}
\delta:J_0(p^n)(\Q)_{\Tor} \to \bigoplus_{i=0}^{n-1} 
\Q(\mu_{p^{d(i)}})^\times \otimes \Q/\Z,
\quad d(i)=\min(i, n-i).
\end{equation}
Thus, Theorem \ref{thm:intro} follow from 
Theorem \ref{thm:c-and-j} and the following theorem,
which is the main result of this article.




\begin{theorem}\label{thm:main}
Let $p \geq 5$ be a prime number and 
let $n$ be a positive integer.
Then the restriction of the map \eqref{eq:delta0}
to $\sC(p^n)$ is injective.
\end{theorem}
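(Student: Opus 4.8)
The plan is to compute the connecting map $\delta$ of \eqref{eq:ex-seq1} explicitly on $\sC(p^n)$ in terms of the leading $q$-expansion coefficients of modular units at the cusps, and then to reduce the asserted injectivity to an arithmetic nondegeneracy statement about those coefficients.

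First I would unwind the snake-lemma definition of $\delta$ coming from \eqref{eq:ex-seq0}. Fix $x\in\sC(p^n)$, write $x=[\mathfrak a]$ with $\mathfrak a=\sum_{i=0}^n a_iP_i$ of degree $0$, and let $m$ be its order, so that there is a modular unit $f$, defined over $\Q$ and normalized by a rational constant so that its leading coefficient at the rational cusp $P_n$ is $1$, with $\div(f)=m\mathfrak a$. Since $\mathfrak a$ is supported on the modulus $C_0(p^n)$ it is not prime to it and cannot directly represent a class in $\widetilde J_0(p^n)$, so I would move it: choose a rational function $h$ over $\Q$ with $\ord_{P_i}(h)=-a_i$ for all $i$, so that $\mathfrak a+\div(h)$ is prime to the cusps and represents a lift $\tilde x$ of $x$ in $\widetilde J_0(p^n)(\Q)$. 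Then $m\tilde x$ is the class of $\div(fh^m)$; since $\ord_{P_i}(fh^m)=0$, the function $fh^m$ has a well-defined nonzero value at each cusp, equal to $A_i(f)\,B_i^m$, where $A_i(f),B_i\in\Q(\mu_{p^{d(i)}})^\times$ denote the leading coefficients of $f$ and $h$ at $P_i$ (assembled over the Galois orbit of geometric cusps above $P_i$). Unwinding $\iota$ in \eqref{eq:ex-seq0} then identifies $\delta(x)$ with $\big(A_i(f)B_i^m\otimes\tfrac1m\big)_{i=0}^{n-1}$; the factor $B_i^m$ contributes $B_i^m\otimes\tfrac1m=B_i\otimes1=0$, which also shows independence of the choice of $h$, and one gets the clean formula $\delta(x)=\big(A_i(f)\otimes\tfrac1m\big)_{i=0}^{n-1}$. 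Since $u\otimes\tfrac1m=0$ in $F^\times\otimes\Q/\Z$ precisely when $u\in\mu(F)$, the theorem reduces to the implication: if $A_i(f)$ is a root of unity for every $i$, then $\mathfrak a$ is principal, i.e.\ $x=0$.

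Second, I would supply the explicit input. Using generalized Dedekind eta functions together with the divisor formula of Ligozat and the transformation formulas of Yang, I would realize generators of $\sC(p^n)$ as classes of weight-zero eta quotients $\prod_{j=0}^n\eta(p^j\tau)^{r_j}$ and compute, for each such unit, both its divisor and every leading coefficient $A_i$ from the $q$-expansion of the transformed eta product. The structural fact I expect to extract is that each $A_i(f)$ is, up to a root of unity, a product of a power of $p$ and cyclotomic units $1-\zeta_{p^{d(i)}}^a$; in particular $A_i(f)$ is supported only at the prime $\mathfrak p$ above $p$ in $\Q(\mu_{p^{d(i)}})$. This is exactly the extra information that the generalized Jacobian records and that is invisible to the divisor-class computation underlying Theorem \ref{thm:c-and-j}.

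Finally I would prove the reduced implication. The hypothesis that every $A_i(f)$ be a root of unity forces in particular the vanishing of its $\mathfrak p$-adic valuation; these valuations are explicit $\Z$-linear functions of the coefficients $(a_i)$, so I would first try to deduce $\mathfrak a=0$ from the resulting integral linear system alone. In the base case $n=1$ the two cusps are rational, only powers of $p$ occur, and the system reduces to the coprimality $\gcd\!\big((p-1)/(p-1,12),\,12/(p-1,12)\big)=1$, which holds automatically; the general case should be the linear-algebra generalization of this. The main obstacle is exactly this nondegeneracy for general $n$: one must control the cyclotomic-unit contributions at the cusps with $d(i)\ge1$ — where the vanishing of the $\mathfrak p$-adic valuation need not by itself force the unit part to be a root of unity, so the extra conditions governed by the independence of cyclotomic units modulo torsion may be required — and carry out the Galois descent from geometric cusps to the closed points $P_i$, so that the relevant exponent matrix can be shown to have full rank. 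By comparison, the reduction of the first paragraph and the eta-quotient bookkeeping of the second are essentially formal once the transformation formulas are in hand.
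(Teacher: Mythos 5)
Your overall strategy (explicit formula for $\delta$ via leading coefficients of modular units, then eta-quotient computations) does parallel the paper's, and your first-paragraph derivation of $\delta(x)=\bigl(A_i(f)\otimes\tfrac1m\bigr)_i$ is essentially the paper's Lemma \ref{lem:evaluation-delta}. But the reduction you build on it rests on a false criterion: it is \emph{not} true that $u\otimes\tfrac1m=0$ in $F^\times\otimes\Q/\Z$ precisely when $u\in\mu(F)$. Since $F^\times/\mu(F)$ is free abelian for a number field $F$, the correct criterion is $u\in(F^\times)^m\mu(F)$. Hence the hypothesis $\delta(x)=0$ only gives $A_i(f)\in(\Q(P_i)^\times)^m\mu(\Q(P_i))$, and the implication you reduce to (all $A_i(f)$ roots of unity $\Rightarrow x=0$) has a strictly stronger hypothesis than the one available; proving it would not prove the theorem. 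This is not a repairable technicality within your framework, because it is exactly where the content of the theorem sits: in $F_i=\Q(\zeta_{p^{d(i)}})$ with $d(i)\ge 1$ one has $\sqrt{p^*}=e^{2\pi i(p-1)/8}\sqrt p\in F_i$, so $p=\pm(\sqrt{p^*})^2$ is a square times a root of unity and $p\otimes\tfrac12=0$ in $F_i^\times\otimes\Q/\Z$ even though $p\notin\mu(F_i)$. Moreover the leading coefficients here are (as the paper computes in Proposition \ref{prop:leading-coeff}) roots of unity times powers of $p$ and of $\sqrt{p^*}$ --- \emph{not} cyclotomic units --- so the real question is which such products are $m$-th powers modulo torsion in $\Q(\zeta_{p^{d}})$. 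Your proposed tools cannot answer this: the coefficients are not units, so ``independence of cyclotomic units modulo torsion'' is beside the point, and since $v_{\mathfrak p}(p)=\phi(p^{d(i)})$, the $\mathfrak p$-adic valuation conditions are blind to $m$-th powers up to the factor $\gcd(m,\phi(p^{d(i)}))$, i.e.\ they resolve nothing at primes dividing $p(p-1)$ (in particular at $2$). The paper supplies the missing arithmetic input in Lemma \ref{lem:def-s}, via inflation-restriction and a Tate cohomology computation for the cyclic group $\Gal(\Q(\zeta_{p^m})/\Q)$: the kernel of $\Q^\times\otimes\Q/\Z\to\Q(\zeta_{p^m})^\times\otimes\Q/\Z$ is exactly $\{0,\,p^*\otimes\tfrac12\}$, which is why the bookkeeping lattice must be generated by $\sqrt{p^*}$ (not $p$) at the nonrational cusps. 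Your proposal has no counterpart of this step.

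There is a second, smaller gap in your endgame. Since $\sC(p^n)=\sD(p^n)/\sP(p^n)$ is finite, injectivity of the induced map is an exact index statement, not a rank statement: writing $\Delta:\sP(p^n)\to\Lambda$ for the leading-coefficient map and $\widetilde\Delta:\sD(p^n)\to\Lambda\otimes\Q$ for its extension, one needs $\widetilde\Delta^{-1}(\Lambda)=\sP(p^n)$, equivalently that the finite group $\Coker(\Delta)$ injects into $\Coker(\widetilde\Delta)$. ``Full rank of the exponent matrix'' gives only injectivity of $\widetilde\Delta$, which still permits a kernel of order up to $|\Coker(\Delta)|$. The paper closes this by showing $\Coker(\Delta)$ is cyclic of order $a'=12/(p-1,12)$ generated by the class of $p$, and then using that the gcd of the coefficients of any divisor $\div(fg_0^{c_0}\cdots g_{n-2}^{c_{n-2}})$ equals $a=(p-1)/(p-1,12)$ (Corollary \ref{cor:gen}(2)), which is coprime to $a'$; some argument of this kind, pinning down indices and not merely ranks, is indispensable.
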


The proof of Theorem \ref{thm:main} will occupy 
almost all of the rest of this article
and will be completed in \S \ref{sect:proof-pn}.
On the other hand, Theorem \ref{thm:main} does not admit 
a naive generalization to other values of level $N$.
Indeed, in the last section \S \ref{sect:level-pq}
we shall observe the following result:

\begin{proposition}\label{prop:intro-pq}
Let $p, q$ be two distinct prime numbers.
(Then $C_0(pq)$ consists of four $\Q$-rational points.)
If $p \equiv q \equiv 1 \bmod 12$, 
then the kernel of the restriction 
to $\sC(pq)$ of 
\[ \delta :J_0(pq)(\Q)_{\Tor} \to
(\Q^\times)^3 \otimes \Q/\Z
\]
is a cyclic group of order $(p-1)(q-1)/24$.
\end{proposition}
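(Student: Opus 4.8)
The plan is to understand the structure of $\sC(pq)$ and the cuspidal divisors generating it.The plan is to reduce the computation of $\delta$ on cuspidal classes to the leading Fourier coefficients of modular units, exactly as in the proof of Theorem \ref{thm:main}. Concretely, if $[D] \in \sC(pq)$ has order $m$ and $f$ is a modular unit with $\div(f) = mD$, then lifting $[D]$ to $\widetilde{J}_0(pq)(\Q)$ and chasing \eqref{eq:ex-seq1} with the snake lemma shows that $\delta([D]) = (c_i \otimes \tfrac{1}{m})_i$, where $c_i$ is the leading coefficient of $f$ at the cusp $P_i$ with respect to a local parameter (the choice of parameter only changes $c_i$ by a value $u(P_i)$, and $u(P_i)\otimes 1 = 0$, so $c_i$ is well defined in $\Q^\times \otimes \Q/\Z$). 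Since $\Q^\times \cong \{\pm 1\} \times \bigoplus_\ell \Z$ and $\{\pm1\}\otimes\Q/\Z = 0$, only the $\ell$-adic valuations of the $c_i$ survive, and for eta quotients these are supported at $\ell \in \{p, q\}$. Thus the whole problem becomes a computation of orders and $\{p,q\}$-adic valuations of leading coefficients of eta quotients at the four cusps, for which I will use Ligozat's formulas.

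The generator of the kernel should be produced explicitly. Consider $f_0 := \eta(p\tau)\eta(q\tau)/(\eta(\tau)\eta(pq\tau))$, which is a modular unit on $X_0(pq)$. Writing $P_c$ for the cusp of denominator $c \mid pq$, Ligozat's order formula gives
\[
\div(f_0) = M\,(P_p + P_q - P_1 - P_{pq}), \qquad M := \frac{(p-1)(q-1)}{24},
\]
the hypothesis $p \equiv q \equiv 1 \bmod 12$ guaranteeing $M \in \Z$. A direct evaluation of the eta transformation formula at each cusp shows that the $\sqrt{p}$- and $\sqrt{q}$-contributions to the leading coefficients cancel because the exponents $(r_1, r_p, r_q, r_{pq}) = (-1,1,1,-1)$ are balanced, so every $c_i$ is a root of unity. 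Hence $\delta([D_0]) = 0$ for $D_0 := P_p + P_q - P_1 - P_{pq}$, and $[D_0]$ lies in $\ker(\delta|_{\sC(pq)})$ with order dividing $M$. I then check that $M$ is exactly the order of $[D_0]$: by Ligozat's criteria no eta quotient has divisor a proper divisor of $M\,D_0$, and, using that on $X_0(pq)$ every modular unit is an eta quotient up to constants, no modular unit does either. This yields $\langle [D_0]\rangle \cong \Z/M\Z \subseteq \ker(\delta|_{\sC(pq)})$.

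It remains to prove that the kernel is no larger, which I expect to be the main obstacle. For this I will use the explicit structure $\sC(pq) \cong \Z^3/L$, where $\Z^3 \cong \Div^0$ on the four cusps and $L$ is the lattice of divisors of modular units, which is of full rank by Manin--Drinfeld and has a basis read off from Ligozat's order matrix. Computing $\delta$ on the remaining generators of $\sC(pq)$ amounts to evaluating the $\{p,q\}$-adic valuations of the corresponding leading coefficients, and these are nonzero precisely in the directions transverse to $[D_0]$. Comparing $|\ker(\delta|_{\sC(pq)})|$ with $|\sC(pq)|/|\IM(\delta|_{\sC(pq)})|$ then forces $|\ker(\delta|_{\sC(pq)})| = M$, so that $\ker(\delta|_{\sC(pq)}) = \langle[D_0]\rangle$ is cyclic of order $M$. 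The delicate points are the bookkeeping of roots of unity in the leading coefficients (harmless here, but easy to mishandle) and verifying that the transverse leading coefficients genuinely contribute nonzero valuation at $p$ or $q$; this last point is exactly what distinguishes the present case from the injectivity established in Theorem \ref{thm:main}.
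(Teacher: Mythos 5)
Your route is essentially the paper's own: reduce $\delta$ to leading Fourier coefficients of modular units via Lemma \ref{lem:evaluation-delta}, observe that signs die in $\Q^\times\otimes\Q/\Z$ so only the $p$- and $q$-adic valuations of those coefficients survive, and exhibit the ``balanced'' eta quotient as the kernel generator. Indeed your $f_0$ and $D_0$ are the inverses of the paper's $f_3=\eta(\tau)\eta(pq\tau)/(\eta(p\tau)\eta(q\tau))$ and of $D_1-D_2-D_3$, your divisor computation $\div(f_0)=M D_0$ with $M=(p-1)(q-1)/24=c$ matches, and your claim that the square-root factors cancel so that the leading coefficients of $f_0$ are roots of unity agrees with the paper's table. (One small imprecision: the correct formula from Lemma \ref{lem:evaluation-delta} is not ``$c_i=$ leading coefficient of $f$ at $P_i$'' but the ratio of the coefficient at the base cusp $P_3$ to the one at $P_i$; this is harmless here because all the relevant functions have coefficient $\pm1$ at $P_3$.)

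There is, however, a genuine gap at the step you invoke twice: that ``on $X_0(pq)$ every modular unit is an eta quotient up to constants,'' equivalently that the lattice $L=\PP(pq)$ ``has a basis read off from Ligozat's order matrix.'' Ligozat's criteria only give the \emph{containment} of the eta-quotient divisor lattice in $\PP(pq)$; equality of the two lattices is exactly as deep as knowing the cuspidal class number, and it is not a fact you can take for granted (for non-squarefree levels the analogous statement over $\C$ fails and generalized eta functions are needed). The paper supplies this input from Takagi's class number formula (Proposition \ref{prop:takagi}): $|\sC(pq)|=4abc$, while the sublattice generated by $\div f_1,\div f_2,\div f_3$ already has index $abc\cdot|\det|=4abc$ in $\DD(pq)$, forcing $\PP(pq)=\langle\div f_1,\div f_2,\div f_3\rangle$. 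Without Takagi's theorem (or an equivalent citation) neither your claim that $[D_0]$ has order exactly $M$ nor your upper bound on the kernel is supported. A second, smaller defect: your closing count ``compare $|\ker|$ with $|\sC(pq)|/|\IM|$'' is performed on the three classes $[D_1-D_2+D_3]$, $[D_1+D_2-D_3]$, $[D_1-D_2-D_3]$ of orders $a,b,c$; but these generate only an index-$4$ subgroup of $\sC(pq)$ (for instance $[P_0-P_1]$ lies outside it), so as stated this only yields $M\le|\ker|\le 4M$. To finish, evaluate $\delta$ on honest generators such as $[P_i-P_3]$, or argue on an arbitrary class: if $mE=\alpha\,\div f_1+\beta\,\div f_2+\gamma\,\div f_3$, then the table gives $\delta([E])=(p^{-\alpha}q^{-\beta},q^{-\beta},p^{-\alpha})\otimes\frac1m$, which vanishes iff $m\mid\alpha$ and $m\mid\beta$, whence $[E]\in\langle[D_1-D_2-D_3]\rangle$; this is the computation the paper's last sentence compresses.
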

In view of Ohta's result (see Remark \ref{rem:conj}),
we find that $\widetilde{J}_0(pq)(\Q)_{\Tor}$
is isomorphic to a cyclic group of order $(p-1)(q-1)/3$
up to $2$-torsion.
This shows another sharp contrast with Theorem \ref{thm:intro}.
We are not able (but hoping) to find
more conceptual reason for such difference.

\subsection*{Notation}
Let $\ol{\Q}$ be an algebraic closure of $\Q$
and fix an embedding $\ol{\Q} \hookrightarrow \C$.
For $m \in \Z_{>0}$, 
we set $\zeta_m := e^{2 \pi i/m} \in \ol{\Q}^\times$
and $\mu_m:=\{ \zeta_m^k \mid k \in \Z \} \subset \ol{\Q}^\times$.
For an abelian group $A$,
we write $A_{\Tor}$ for the subgroup of torsion elements of $A$.
For a field $F$, we write $\mu(F)=(F^\times)_{\Tor}$.

\section{Torsion rational points on generalized Jacobian}
\subsection{}\label{sect:delta-intro}
In this section, we use the notations introduced in
\S \ref{sect:as-small}.
We always assume $P_n$ is $k$-rational.
We will give an explicit description of the map $\delta$
in \eqref{eq:ex-seq1}
in Lemma \ref{lem:evaluation-delta} below.
Take $x \in \bigoplus_{i=0}^{n-1} k(P_i)^\times$, $m \in \Z_{>0}$
and $a \in J(k)$ such that $ma=0$.
Then by definition we have
$\delta(a)=x \otimes \frac{1}{m}$
if there is a lift $\widetilde{a} \in \widetilde{J}(k)$ of $a$
such that $\iota(x)=m \widetilde{a}$,
where $\iota$ is the map appearing in \eqref{eq:ex-seq0}.

\subsection{}
We recall some basic facts about the relative Picard group
and generalized Jacobian (cf. \cite[Chapter V]{Serre}).
Denote by $K$ the function field of $C$.
For a closed point $P$ on $C$,
we write $K_P$ for the completion of $K$ at $P$,
$O_P$ for the ring of integers in $K_P$,
$t_P \in O_P$ for a (fixed) uniformizer,
$U_P :=(1+t_P O_P)^\times$ for the group of principal units in $O_P$,
and $k(P):=O_P/t_PO_P$ for the residue field at $P$.

Let $U := C \setminus |D|$
be the open complement of the divisor $D=P_0 + \dots + P_n$.
Let us consider the abelian group
\[ \Div(C, D) := \Div(U)
\oplus \bigoplus_{i=0}^n (K_{P_i}^\times/U_{P_i}).
\]
We have a canonical map
\[ K^\times \to \Div(C, D), \quad
f \mapsto \Big(\div_U(f); (f \bmod U_{P_i})_{i=0}^n)\Big),
\]
whose cokernel is by definition
the {\it relative Picard group}
$\Pic(C, D)$ of $C$ relative to $D$.
We also have a commutative diagram with exact rows
\[
\xymatrix{
0
\ar[r]
&
K^\times
\ar[r]
\ar@{=}[d]
&
\Div(C, D)
\ar[r]
\ar@{->>}[d]_{\alpha}
&
\Pic(C, D)
\ar[r]
\ar@{->>}[d]_{\ol{\alpha}}
&
0
\\
k^\times
\ar@{^{(}->}[r]
&
K^\times
\ar[r]
&
\Div(C)
\ar[r]
&
\Pic(C)
\ar[r]
&
0,
}
\]
where $\alpha$ a canonical surjection given by
\[ \Big(E; (f_i \bmod U_{P_i})_{i=0}^n \Big) \mapsto E + \sum_{i=0}^n \ord_{P_i}(f) P_i,
\]
and $\ol{\alpha}$ is induced by $\alpha$.
Combined with isomorphisms
\begin{gather*}
\ker(\alpha) \cong \bigoplus_{i=0}^n (O_{P_i}^\times/U_{P_i})
\cong \bigoplus_{i=0}^n k(P_i)^\times,
\\
\bigoplus_{i=0}^n k(P_i)^\times/(k^\times) \cong 
 \bigoplus_{i=0}^{n-1} k(P_i)^\times,
\quad
(c_i)_{i=0}^n \bmod (k^\times) \mapsto (c_i/c_n)_{i=0}^{n-1},
\end{gather*}
where $(k^\times)$ is the image of 
the diagonal map $k^\times \to \oplus_i~ k(P_i)^\times$,
we obtain an exact sequence
\begin{equation}\label{eq:pic-ex}
 0 \to \bigoplus_{i=0}^{n-1} k(P_i)^\times \to \Pic(C, D) \to \Pic(C) \to 0.
\end{equation}

On the other hand,
there are  canonical isomorphisms
\begin{align*}
&{J}(k) \cong \ker[\Pic(C) \overset{\deg}{\to} \Z],
\\
&\widetilde{J}(k) \cong \ker[\Pic(C, D) \overset{\ol{\alpha}}{\to}
\Pic(C) \overset{\deg}{\to} \Z].
\end{align*}
Then \eqref{eq:ex-seq0} is deduced from \eqref{eq:pic-ex} by restriction.
We also obtain
\[J(k)_{\Tor} \cong \Pic(C)_{\Tor},
\quad
\widetilde{J}(k)_{\Tor} \cong \Pic(C, D)_{\Tor}.
\]

\subsection{}
We are now ready to describe explicitly 
the map $\delta$ from \eqref{eq:ex-seq1}.

\begin{lemma}\label{lem:evaluation-delta}
Let $E=\sum_{i=0}^n a_i P_i \in \Div^0(C)$ 
be a degree zero divisor supported on $D$.
Suppose that its class $[E]$ in $J(k)$ is killed by $m \in \Z_{>0}$
so that there is $f \in K^\times$ such that $\div_C(f)=mE$.
Define
\[ 
\sE:=
\Big( (\frac{f}{t_{P_n}^{m a_n}})(P_n) 
(\frac{t_{P_i}^{m a_i}}{f})(P_i) \Big)_{i=0}^{n-1}
\in \bigoplus_{i=0}^{n-1} k(P_i)^\times.
\] 
Then we have
\[ \delta([E]) = 
\sE \otimes \frac{1}{m}
\quad \text{ in } \quad
\bigoplus_{i=0}^{n-1} k(P_i)^\times \otimes \Q/\Z.
\] 
(Note that $\sE \otimes \frac{1}{m}$
does not depend on the choices of $t_{P_i}$ and $f$.)
\end{lemma}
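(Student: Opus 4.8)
The plan is to compute $\delta([E])$ directly from the recipe recalled in \S\ref{sect:delta-intro}, using the relative Picard group models $\widetilde{J}(k)\cong\Pic(C,D)$ and $J(k)\cong\Pic(C)$. Concretely, I would exhibit an explicit lift $\widetilde a\in\widetilde J(k)$ of $a=[E]$, show that $m\widetilde a=\iota(\sE)$, and then conclude $\delta([E])=\sE\otimes\frac1m$ by the very definition of $\delta$.

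First I would lift $E=\sum_{i=0}^n a_iP_i$ to $\Div(C,D)$. Since $E$ is supported on $|D|$, its $\Div(U)$-component vanishes, and the element $\widetilde E:=\bigl(0;(t_{P_i}^{a_i}\bmod U_{P_i})_{i=0}^n\bigr)$ satisfies $\alpha(\widetilde E)=\sum_i\ord_{P_i}(t_{P_i}^{a_i})P_i=E$. Its class $\widetilde a:=[\widetilde E]\in\Pic(C,D)$ has degree zero, hence lies in $\widetilde J(k)$ and lifts $a=[E]$ through $\ol\alpha$.

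Next I would compute $m\widetilde a$. Multiplication by $m$ raises the unit components to the $m$-th power, so $m\widetilde a=\bigl[(0;(t_{P_i}^{ma_i})_{i=0}^n)\bigr]$. The hypothesis $\div_C(f)=mE$ says that $f$ has neither zeros nor poles on $U=C\setminus|D|$ and that $\ord_{P_i}(f)=ma_i$; hence the image of $f$ under $K^\times\to\Div(C,D)$ is $\bigl(0;(f\bmod U_{P_i})_{i=0}^n\bigr)$, which is zero in $\Pic(C,D)$ by definition of the relative Picard group. Subtracting, $m\widetilde a=\bigl[(0;(t_{P_i}^{ma_i}/f)_{i=0}^n)\bigr]$. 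Now $\ord_{P_i}(t_{P_i}^{ma_i}/f)=ma_i-ma_i=0$, so each $t_{P_i}^{ma_i}/f\in O_{P_i}^\times$ and the class $m\widetilde a$ lies in $\IM\iota=\ker\ol\alpha$. Under the identification $\ker\alpha\cong\bigoplus_{i=0}^n k(P_i)^\times$ it corresponds to $\bigl((t_{P_i}^{ma_i}/f)(P_i)\bigr)_{i=0}^n$, and passing to $\bigoplus_{i=0}^{n-1}k(P_i)^\times$ via $(c_i)_i\mapsto(c_i/c_n)_i$ — legitimate because $P_n$ is $k$-rational, so $c_n=(t_{P_n}^{ma_n}/f)(P_n)\in k^\times$ — yields exactly $\sE$, the reciprocal at $P_n$ arising as the common denominator $c_n$. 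Thus $m\widetilde a=\iota(\sE)$, and the recipe of \S\ref{sect:delta-intro} gives $\delta([E])=\sE\otimes\frac1m$.

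Finally, for the parenthetical independence claim I would note that replacing $t_{P_i}$ by $u_it_{P_i}$ with $u_i\in O_{P_i}^\times$ multiplies the corresponding coordinate of $\sE$ by the $m$-th power $u_i(P_i)^{ma_i}$, which dies after $\otimes\frac1m$, while replacing $f$ by $cf$ with $c\in k^\times$ (the only ambiguity, since $\div_C(f)$ determines $f$ up to a multiplicative constant) leaves $\sE$ unchanged, the factors at $P_n$ and at $P_i$ cancelling. I expect the main obstacle to be purely bookkeeping: keeping the two isomorphisms in \eqref{eq:pic-ex} straight — in particular tracking how the quotient by the diagonal $k^\times$ produces the evaluation factor $(\frac{f}{t_{P_n}^{ma_n}})(P_n)$ — and verifying that the asymmetric placement of $P_n$ (reciprocal at $P_n$ versus the other $P_i$) comes out with the correct signs.
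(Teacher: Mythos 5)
Your proof is correct and follows essentially the same route as the paper: lift $E$ to $\widetilde E=(0;(t_{P_i}^{a_i})_{i=0}^n)\in\Div(C,D)$, use $\div_U(f)=0$ to see that the image of $f$ vanishes in $\Pic(C,D)$, and thereby identify the class of $m\widetilde E$ with $\iota(\sE)$ under the isomorphisms behind \eqref{eq:pic-ex}. Your explicit verification of the parenthetical independence claim (which the paper only asserts) is a welcome, and correct, addition.
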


\begin{proof}
We use the fact recalled in \S \ref{sect:delta-intro}.
Put $\widetilde{E}:= (0; (t_{P_i}^{a_i})_{i=0}^n) \in \Div(C, D)$
so that $\alpha(\widetilde{E})=E$.
It suffices to prove that 
the class of $m\widetilde{E}$ in $\Pic(C, D)$
is the same as $\iota(\sE) \in \widetilde{J}(k) \subset \Pic(C, D)$
(see \eqref{eq:ex-seq0} for the map $\iota$).
By definition,
$\iota(\mathcal{E})$ is given by the class of 
$(0; (f^{-1} t_{P_i}^{ma_i})_{i=0}^n)$.
Since $\div_C(f)=mE$, we have $\div_U(f)=0$,
and hence the class of
$(0; (f^{-1} t_{P_i}^{ma_i})_{i=0}^n)$
agrees with
that of 
$(0; (t_{P_i}^{ma_i})_{i=0}^n)=m \widetilde{E}$ in $\Pic(C, D)$.
We are done.
\end{proof}


\section{Preliminaries on modular curves}
\subsection{}\label{sect:cusps}
We return to the setting in \S \ref{sect:starter}.
We take an integer $N>1$ 
and consider the modular curve $X_0(N)$.
Recall that $C_0(N)$ denotes the set of cusps on $X_0(N)$
so that we have a canonical bijection
$C_0(N)(\C) \cong \Gamma_0(N)\backslash \P^1(\Q)$.
For each divisor $d>0$ of $N$,
there is a unique $Q_d \in C_0(N)$
such that the set of $\C$-rational points lying over $Q_d$
is given by $\Gamma_0(N)$-orbits of
$a/d \in \P^1(\Q)$ with $a \in \Z, ~(a,d)=1$.
We call $Q_d \in C_0(N)$ the cusp of \emph{level} $d$.
The residue field of $Q_d$ is $\Q(\zeta_m)$ with $m=(d, N/d)$,
hence the degree of $Q_d$ is $\phi(m)$,
where $\phi$ denotes the Euler function.
The classes of $0$ and $\infty \in \P^1(\Q)$ 
are $\Q$-rational and are of level $1$ and $N$ respectively.

We define
\begin{align}
\notag 
&\begin{aligned}
\DD(N) &:= \{ E \in \Div^0(X_0(N)) \mid |E| \subset C_0(N) \}
\\
&=\langle Q_d-\phi((d,N/d))Q_N \mid d|N \rangle,
\end{aligned}
\\
\label{eq:def-pp}
&\PP(N) := \DD(N) \cap \div(\Q(X_0(N))^\times),
\\
\notag 
&\CC(N) := \DD(N)/\PP(N).
\end{align}
As we recalled in the introduction,
$\sC(N)$ is a subgroup of $J_0(N)(\Q)_{\Tor}$,
hence finite.


\subsection{} We will use the Dedekind eta function to construct
modular functions needed for our purpose. Here let us recall some
well-known properties of the Dedekind eta function $\eta(\tau)$,
where as usual $\tau$ is a variable on the upper half plane $\mathbb{H}$.
We shall make use of the standard identification 
$X_0(N)(\C) \cong \Gamma_0(N)\backslash(\mathbb{H} \cup \P^1(\Q))$.

\begin{Proposition}[{\cite[Proposition 3.2.1]{Ligozat}}] \label{proposition: conditions for eta}
  Let $N$ be a positive integer. 
  The product
  $h=\prod_{\delta|N}\eta(\delta\tau)^{r_\delta}$, $r_\delta\in\Z$, is a
  modular function on $X_0(N)$ 
  if and only if the following conditions
  are satisfied:
  \begin{enumerate}
  \item $\sum_{\delta|N}r_\delta=0$,
  \item $\prod_{\delta|N}\delta^{r_\delta}$ is the square of a rational number,
  \item $\sum_{\delta|N}r_\delta\delta\equiv 0\mod 24$, and
  \item $\sum_{\delta|N}r_\delta(N/\delta)\equiv 0\mod 24$.
  \end{enumerate}
\end{Proposition}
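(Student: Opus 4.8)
The plan is to use the characterization of a modular function on $X_0(N)$ as a meromorphic function on $\bH$ that is $\Gamma_0(N)$-invariant, has weight $0$, and is meromorphic at every cusp. An eta-quotient $h=\prod_{\delta|N}\eta(\delta\tau)^{r_\delta}$ is manifestly meromorphic on $\bH$ (a ratio of nowhere-vanishing holomorphic functions), and the standard order-of-vanishing formula for $\eta$ at cusps shows that $h$ automatically has finite order at each cusp, so meromorphy at the cusps imposes no condition. The weight of $h$ is $\tfrac12\sum_{\delta|N}r_\delta$, so the requirement of weight $0$ is exactly condition (1). It therefore remains to analyze when $h$ is $\Gamma_0(N)$-invariant, and this is where conditions (2)--(4) will come from.

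First I would reduce the invariance to the transformation law of $\eta$ under $\SL_2(\Z)$. Writing $\eta(\gamma\tau)=\epsilon(\gamma)\,(-i(c\tau+d))^{1/2}\eta(\tau)$ for $\gamma=\M{a}{b}{c}{d}$ with $c>0$, where $\epsilon(\gamma)$ is the usual root of unity expressed through a Dedekind sum $s(d,c)$ (and, for odd $c$, through the Jacobi symbol $\left(\tfrac{d}{c}\right)$), the key observation is that for $\gamma\in\Gamma_0(N)$ and $\delta|N$ one has $\delta\cdot\gamma\tau=\gamma_\delta\cdot(\delta\tau)$ with $\gamma_\delta=\M{a}{b\delta}{c/\delta}{d}\in\SL_2(\Z)$; here $\gamma_\delta$ is integral because $N\mid c$ forces $\delta\mid c$. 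Applying the transformation law factor by factor, the automorphy factors combine to $(c\tau+d)^{(\sum_\delta r_\delta)/2}$, which is trivial once condition (1) holds, and what survives is a product of multipliers. The upshot, which is the content of the refined transformation formula, is that under (1) one has $h(\gamma\tau)=\chi(d)\,h(\tau)$ for all $\gamma\in\Gamma_0(N)$, where $\chi$ is a quadratic character determined by $s:=\prod_{\delta|N}\delta^{r_\delta}$, provided the congruences that kill the Dedekind-sum part of the multiplier are imposed.

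Next I would pin down those congruences and the character by testing on well-chosen elements. Using $T=\M{1}{1}{0}{1}\in\Gamma_0(N)$ together with $\eta(\delta(\tau+1))=\zeta_{24}^{\delta}\eta(\delta\tau)$ gives $h(\tau+1)=\zeta_{24}^{\sum_\delta\delta r_\delta}h(\tau)$, so invariance forces $\sum_\delta \delta r_\delta\equiv 0\bmod 24$, which is condition (3). Conjugating by the Fricke involution $W_N=\M{0}{-1}{N}{0}$, which normalizes $\Gamma_0(N)$ and interchanges the roles of $\delta$ and $N/\delta$, transforms $h$ (up to automorphy and constant factors) into the eta-quotient with exponents $r_{N/\delta}$; applying the same $T$-computation to it produces the dual congruence $\sum_\delta (N/\delta) r_\delta\equiv 0\bmod 24$, which is condition (4). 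Finally, once (1), (3) and (4) hold, the only remaining obstruction to invariance is the triviality of the quadratic character $\chi$ attached to $s$; since $\chi$ is governed by the Jacobi symbol $\left(\tfrac{s}{\,\cdot\,}\right)$, it is trivial on all of $\Gamma_0(N)$ exactly when $s=\prod_{\delta|N}\delta^{r_\delta}$ is a square in $\Q^\times$, which is condition (2).

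The main obstacle is the precise bookkeeping of the eta multiplier system: one must track how the Dedekind sums $s(d,c/\delta)$ assemble over the divisors $\delta$, invoke Dedekind-sum reciprocity to convert them into the clean congruences modulo $24$, and correctly isolate the Jacobi-symbol (sign) part that yields the square condition (2). A secondary subtlety is to argue that verifying $h(\gamma\tau)=h(\tau)$ on a generating set of $\Gamma_0(N)$, or equivalently checking the triviality of the character $\chi$, really suffices to establish invariance for all of $\Gamma_0(N)$; this rests on the fact that the a priori multiplier is a genuine character, so that the finitely many conditions (2)--(4) are not only necessary but sufficient. Once these are in hand, combining them with the weight-$0$ reduction (1) and the automatic meromorphy at the cusps completes the equivalence.
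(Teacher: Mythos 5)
The paper itself gives no proof of this proposition—it is quoted from Ligozat \cite[Proposition 3.2.1]{Ligozat}—so your attempt can only be compared with the standard argument, whose overall shape you have correctly identified: (1) is the weight-zero requirement, meromorphy at the cusps is automatic for eta quotients, the conjugation identity $\delta(\gamma\tau)=\gamma_\delta(\delta\tau)$ with $\gamma_\delta=\SM{a}{b\delta}{c/\delta}{d}$ reduces invariance to a product of eta multipliers, and the tests at $T$ and at its Fricke conjugate yield (3) and (4). The genuine gap is that the heart of the proposition is asserted rather than proved: the claim that, once (1), (3), (4) hold, the product $\prod_{\delta|N}\epsilon(\gamma_\delta)^{r_\delta}$ of $24$th roots of unity collapses to a single quadratic symbol attached to $s=\prod_{\delta|N}\delta^{r_\delta}$ is exactly the ``refined transformation formula'' you invoke, and you explicitly defer its verification as ``the main obstacle.'' Citing that formula here is circular, since the proposition essentially \emph{is} that formula; without it you have only the easy necessity of (1), (3), (4), and neither sufficiency nor necessity of (2). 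A second, smaller gap: necessity of (2) requires showing that when the squarefree part $s_1$ of $s$ exceeds $1$, the character $d\mapsto\left(\frac{s_1}{d}\right)$ is nontrivial on the set of $d$'s actually occurring in $\Gamma_0(N)$, i.e.\ on integers coprime to $N$; this needs care precisely because every prime factor of $s_1$ divides $N$, and it follows from a CRT choice of $d$ (say $d\equiv 1\bmod 4$, $d$ a nonresidue modulo one prime factor of $s_1$, and $d\equiv 1$ modulo the remaining primes dividing $N$), not merely from the remark that $\chi$ is ``governed by'' a Jacobi symbol.

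The missing computation is in fact shorter than your Dedekind-sum-plus-reciprocity plan suggests if you use the $d$-odd form of the multiplier already recorded in the paper as Lemma \ref{lemma: eta}. For $\gamma=\SM abcd\in\Gamma_0(N)$ with $d$ odd, applying that formula to each $\gamma_\delta$ gives
\[
\prod_{\delta|N}\epsilon(\gamma_\delta)^{r_\delta}
=\Bigl(\prod_{\delta|N}\left(\tfrac{c/\delta}{d}\right)^{r_\delta}\Bigr)
\exp\Bigl(\tfrac{\pi i}{12}\Bigl[a(1-d^2)\,c\sum_{\delta|N}\tfrac{r_\delta}{\delta}
+db\sum_{\delta|N}r_\delta\delta
-dc\sum_{\delta|N}\tfrac{r_\delta}{\delta}
+3d\sum_{\delta|N}r_\delta\Bigr]\Bigr).
\]
Since $\left(\frac{c/\delta}{d}\right)=\left(\frac cd\right)\left(\frac{\delta}{d}\right)$, under (1) the symbols combine to the Jacobi symbol of the squarefree part of $s$; the term $3d\sum r_\delta$ vanishes by (1); $c\sum_{\delta}r_\delta/\delta=(c/N)\sum_{\delta}r_\delta(N/\delta)\equiv 0\bmod 24$ because $N\mid c$ and (4) holds; and $db\sum_\delta r_\delta\delta\equiv 0\bmod 24$ by (3). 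Hence the multiplier equals $\left(\frac{s_1}{d}\right)$, which is trivial for all admissible $d$ precisely when $s_1=1$, i.e.\ when (2) holds (necessity by the CRT argument above). Matrices with $d$ even are recovered from $\gamma T$, whose lower-right entry $c+d$ is odd, using that under (1) the multiplier is a genuine character and $v(T)=1$ by (3). With these two additions your outline becomes a complete proof along the same lines as Ligozat's.
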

We also remark that,
if these conditions are satisfied, then $h$ is defined over $\Q$
(see \cite[p. 32, Remarque]{Ligozat}).

\begin{Lemma}[{\cite[Proposition 3.2.8]{Ligozat}}]\label{lem:eta-prod-div}
  Let $N$ be a positive integer. Let $d$ and $\delta$ be positive
  divisor of $N$. Then the order of $\eta(\delta\tau)$ at a cusp of
  level $d$ 
  is $a_N(d,\delta)/24$, where
  $$
    a_N(d,\delta):=\frac N{(d,N/d)}\frac{(d,\delta)^2}{d\delta}.
  $$
  In particular, if
  $g(\tau)=\prod_{\delta|N}\eta(\delta\tau)^{r_\delta}$ is an
  eta-product satisfying the conditions in Proposition 
  \ref{proposition: conditions for eta}, then
  $$
    \div g=\frac1{24}\sum_{d, \delta|N}r_\delta a_N(d,\delta)(Q_d).
  $$
\end{Lemma}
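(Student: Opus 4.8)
The plan is to reduce the statement to the single factor $\eta(\delta\tau)$, compute the leading exponent of its local expansion at each cusp $Q_d$ from the transformation law of the Dedekind eta function, and then pass to the valuation on $X_0(N)$ by inserting the width of the cusp. Since this leading exponent is additive in products, and the weight-$\tfrac12$ automorphy factors cancel once condition (1) of Proposition \ref{proposition: conditions for eta} is imposed, the ``in particular'' formula $\div g=\frac1{24}\sum_{d,\delta}r_\delta a_N(d,\delta)(Q_d)$ will follow at once by linearity from the single-factor formula $\ord_{Q_d}\eta(\delta\tau)=a_N(d,\delta)/24$. So the whole task is the single-factor computation.

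First I would fix the cusp $Q_d$ of level $d\mid N$. By \S\ref{sect:cusps} it is represented by $a/d$ with $(a,d)=1$, and I would pick $\sigma=\M{a}{b}{d}{e}\in\SL_2(\Z)$, $ae-bd=1$, with $\sigma(\infty)=a/d$. Substituting $\tau=\sigma z$ turns $\eta(\delta\tau)$ into $\eta(Mz)$ for the integral matrix $M:=\M{\delta a}{\delta b}{d}{e}$ of determinant $\delta$. The decisive step is the Hermite normal form of $M$: there is $\gamma\in\SL_2(\Z)$ with $M=\gamma\M{A}{B}{0}{D}$, where $AD=\delta$ and $A=(\delta a,d)$. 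Here the hypothesis $(a,d)=1$ forces $(\delta a,d)=(\delta,d)$, so that $A=(d,\delta)$ and $D=\delta/(d,\delta)$.

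Next I would apply the classical eta transformation law $\eta(\gamma w)=\nu(\gamma)(c_\gamma w+d_\gamma)^{1/2}\eta(w)$ for $\gamma\in\SL_2(\Z)$ (with $\nu(\gamma)$ a $24$th root of unity whose explicit value is irrelevant) to $w=(Az+B)/D$, and then read off the leading term of $\eta(w)$ from the product expansion $\eta(w)=e^{2\pi iw/24}\prod_{m\ge1}(1-e^{2\pi imw})$. Since $\operatorname{Im} w=\frac{A}{D}\operatorname{Im} z\to\infty$ as $z\to i\infty$, the product tends to $1$ and the leading term is a nonzero constant times $q_z^{A/(24D)}$, where $q_z=e^{2\pi iz}$. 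The automorphy factor $(c_\gamma w+d_\gamma)^{1/2}$ is precisely the weight-$\tfrac12$ factor; after normalizing $\eta(\delta\tau)$ by the slash operator of weight $\tfrac12$ (equivalently, after forming the weight-$0$ product $g$, in which condition (1) makes all such factors cancel) it does not affect the order. Thus the leading exponent of $\eta(\delta\tau)$ in $q_z$ at $Q_d$ is $\frac{A}{24D}=\frac{(d,\delta)^2}{24\delta}$.

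To finish, I would convert this exponent into the valuation on $X_0(N)$. The local uniformizer at $Q_d$ is $e^{2\pi iz/h_d}$, where $h_d=N/(d^2,N)$ is the width of the cusp of level $d$; the elementary identity $(d^2,N)=d\,(d,N/d)$ rewrites this as $h_d=N/(d\,(d,N/d))$. Since $q_z=\left(e^{2\pi iz/h_d}\right)^{h_d}$, the order on the curve is $h_d$ times the exponent above:
\[
\ord_{Q_d}\eta(\delta\tau)=\frac{N}{d\,(d,N/d)}\cdot\frac{(d,\delta)^2}{24\delta}
=\frac1{24}\,\frac{N}{(d,N/d)}\frac{(d,\delta)^2}{d\delta}=\frac{a_N(d,\delta)}{24},
\]
as claimed, and summing with weights $r_\delta$ yields the displayed formula for $\div g$. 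The main obstacle is the bookkeeping in the two middle steps: extracting the Hermite exponents $A=(d,\delta)$, $D=\delta/(d,\delta)$ correctly (which genuinely uses $(a,d)=1$) and pinning down the width so that the factor $h_d$ enters exactly once. The transformation law and the extraction of the leading $q$-power are classical and routine; the real care lies in checking that every prefactor---the root of unity $\nu(\gamma)$, the constant $e^{2\pi iB/(24D)}$, the convergent product, and the weight-$\tfrac12$ automorphy factor---is a unit at the cusp, so that none of them alters the order, and in verifying that the outcome is independent of the auxiliary choices of $a,b,e$.
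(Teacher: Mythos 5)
Your proof is correct, but note that the paper itself gives no argument for this lemma: it is quoted verbatim from Ligozat (Proposition 3.2.8), so there is no internal proof to match against. Your computation checks out at every step: the Hermite factorization $\M{\delta a}{\delta b}{d}{e}=\gamma\M{A}{B}{0}{D}$ with $A=(\delta a,d)=(d,\delta)$ (using $(a,d)=1$ and primitivity of the columns of $\gamma$), the leading exponent $A/(24D)=(d,\delta)^2/(24\delta)$ in $q_z$, and the width $h_d=N/(d^2,N)=N/(d\,(d,N/d))$ all combine to give exactly $a_N(d,\delta)/24$. One point you state loosely but which survives scrutiny: the weight-$\tfrac12$ factors for the different $\delta$ are not equal, but since the bottom row of $\gamma\M{A}{B}{0}{D}$ is $(d,e)$, each factor equals $((dz+e)/D_\delta)^{1/2}$, so under condition (1) ($\sum_\delta r_\delta=0$) the $(dz+e)^{1/2}$ parts cancel exactly and only nonzero constants remain, which indeed cannot shift the order. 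Your route is essentially the standard proof of Ligozat's formula and is, in spirit, the same computation the paper carries out later in Section 5 for the special case $N=p^n$: there the authors use the explicit matrices $\sigma_m$ of \eqref{equation: choice of sigma} in place of your Hermite reduction, and the Remark following Lemma \ref{lemma: eta at cusps} observes that those computations recover Corollary \ref{corollary: order of eta}. The differences are that your argument works uniformly for arbitrary $N$ (which is what the lemma asserts), while the paper's Section 5 computation, being aimed at Proposition \ref{prop:leading-coeff}, must additionally track the root-of-unity multipliers and constant prefactors that you are entitled to discard, since you only need orders and not leading coefficients. A final small point you handle implicitly and correctly: your formula depends only on $d$ and not on the representative $a$, so the order is the same at all Galois-conjugate cusps of level $d$, which is what justifies writing $\div g$ with the closed points $(Q_d)$ in the displayed formula.
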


When $N=p^n$ is a prime power, the orders of $\eta(p^k\tau)$ at cusps
can be summarized as follows.

\begin{Corollary} \label{corollary: order of eta}
Let $p^n$ be a prime power.
\begin{enumerate}
\item If $m\ge n/2$, then the order of $\eta(p^k\tau)$ at a cusp of
  level $p^m$ is
  $$
    \begin{cases}
    p^k/24, &\text{if }k\le m, \\
    p^{2m-k}/24, &\text{if }k>m. \end{cases}
  $$
\item If $m<n/2$, then the order of $\eta(p^k\tau)$ at a cusp of level
  $p^m$ is
  $$
    \begin{cases}
    p^{n-k}/24, &\text{if }m\le k, \\
    p^{n+k-2m}/24, &\text{if }m>k. \end{cases}
  $$
\end{enumerate}
\end{Corollary}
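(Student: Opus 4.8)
The plan is to derive this directly from Lemma~\ref{lem:eta-prod-div} by specializing to $N=p^n$, $d=p^m$, $\delta=p^k$ and evaluating the quantity $a_{p^n}(p^m,p^k)/24$. First I would compute the two greatest common divisors appearing in the definition of $a_N(d,\delta)$. Since all the arguments are powers of a single prime $p$, these gcds are themselves powers of $p$ governed by minima of exponents: one has $(d,N/d)=(p^m,p^{n-m})=p^{\min(m,n-m)}$ and $(d,\delta)=(p^m,p^k)=p^{\min(m,k)}$. Substituting into $a_{p^n}(p^m,p^k)=\frac{p^n}{(p^m,p^{n-m})}\cdot\frac{(p^m,p^k)^2}{p^{m+k}}$ then yields a single power of $p$ whose exponent is
\[
  e=n-\min(m,n-m)+2\min(m,k)-m-k,
\]
so that the asserted order equals $p^{e}/24$.

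The remaining step is to resolve the two minima by a short case analysis, which is exactly what produces the two parts of the statement and the two branches within each part. The comparison of $m$ with $n/2$ fixes the value of $\min(m,n-m)$: when $m\ge n/2$ we have $\min(m,n-m)=n-m$, so $e$ collapses to $2\min(m,k)-k$, while when $m<n/2$ we have $\min(m,n-m)=m$, so $e=n-2m+2\min(m,k)-k$. Independently, the comparison of $m$ with $k$ fixes the value of $\min(m,k)$. Combining the two comparisons gives the four regimes, and in each one $e$ simplifies to the exponent claimed in the corollary; for instance, in part~(1) with $k\le m$ one gets $e=2k-k=k$, and with $k>m$ one gets $e=2m-k$, and the two branches of part~(2) come out analogously.

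Since the whole argument is a single substitution followed by elementary manipulation of minima of integers, I do not expect any genuine obstacle. The only point requiring care is the bookkeeping, namely keeping the two sources of case-splitting — the position of $m$ relative to $n/2$ on one hand, and relative to $k$ on the other — clearly separated, so that all four cases are covered without overlap or omission. I would therefore organize the write-up around the uniform formula for $e$ above and read off the four displayed cases from it by inspection.
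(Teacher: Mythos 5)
Your proposal is correct and is essentially the paper's own (implicit) argument: the paper states this as an immediate corollary of Lemma~\ref{lem:eta-prod-div}, and specializing $a_N(d,\delta)$ to $N=p^n$, $d=p^m$, $\delta=p^k$ with the gcds read off as minima of exponents is exactly the intended derivation. Your case analysis reproduces all four exponents correctly, so there is nothing to add.
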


In order to obtain the Fourier expansion of an eta-product at a cusp,
we should need the following transformation formula for the Dedekind eta
function.

\begin{Lemma}[{\cite[pp.~125--127]{Weber}}] \label{lemma: eta} For
$$
  \gamma=\begin{pmatrix}a&b\\ c&d\end{pmatrix}\in SL_2(\mathbb Z),
$$
the transformation formula for $\eta(\tau)$ is given by, for $c=0$,
$$
  \eta(\tau+b)=e^{\pi ib/12}\eta(\tau),
$$
and, for $c\neq 0$,
$$
  \eta(\gamma\tau)=\epsilon(a,b,c,d)\sqrt{\frac{c\tau+d}i}\eta(\tau)
$$
with
\begin{equation*}
  \epsilon(a,b,c,d)=
  \begin{cases}\displaystyle
   \left(\frac dc\right)i^{(1-c)/2}
   e^{\pi i\left(bd(1-c^2)+c(a+d)\right)/12},
    &\text{if }c\text{ is odd},\\
  \displaystyle 
  \left(\frac cd\right)e^{\pi i\left(ac(1-d^2)+d(b-c+3)\right)/12},
    &\text{if }d\text{ is odd},
  \end{cases}
\end{equation*}
where $\displaystyle\left(\frac dc\right)$ is the Jacobi symbol.
\end{Lemma}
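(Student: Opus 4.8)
The plan is to derive the formula from the product expansion $\eta(\tau) = q^{1/24}\prod_{n\ge 1}(1-q^n)$, where $q = e^{2\pi i\tau}$, by first treating the two generators of $\SL_2(\Z)$ and then assembling the general multiplier system. The translation case $c=0$, i.e. $\eta(\tau+b)=e^{\pi i b/12}\eta(\tau)$, is immediate: replacing $\tau$ by $\tau+b$ multiplies $q^{1/24}$ by $e^{\pi i b/12}$ and fixes every factor $1-q^n$, since $q$ is unchanged by $\tau\mapsto\tau+b$ for integral $b$. All of the content therefore lies in the case $c\neq 0$, whose heart is the inversion $S=\SM{0}{-1}{1}{0}$.

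First I would prove the inversion identity $\eta(-1/\tau)=\sqrt{\tau/i}\,\eta(\tau)$, the one genuinely analytic input. Taking a principal branch of the logarithm, $\log\eta(\tau)=\frac{\pi i\tau}{12}-\sum_{m,n\ge 1}\frac1m q^{mn}$, I would compare $\log\eta(-1/\tau)$ with $\log\eta(\tau)$ by the classical Siegel-type contour argument, integrating $\cot(\pi z)\cot(\pi z/\tau)$ around a suitable parallelogram and collecting residues; this produces exactly the discrepancy $\frac12\log(\tau/i)$. Equivalently one may apply Poisson summation to the relevant theta constant. Either route yields the inversion law with the correct branch of the square root (note $\tau/i=-i\tau$, so this is the familiar $\eta(-1/\tau)=\sqrt{-i\tau}\,\eta(\tau)$).

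Next I would extend to a general $\gamma=\SM{a}{b}{c}{d}$ with $c>0$ via Dedekind's functional equation $\log\eta(\gamma\tau)=\log\eta(\tau)+\frac12\log\frac{c\tau+d}{i}+\pi i\big(\frac{a+d}{12c}-s(d,c)\big)$, where $s(d,c)=\sum_{k=1}^{c-1}((k/c))((kd/c))$ is the Dedekind sum and $((x))$ the sawtooth function. This follows by iterating the $S$ and $T$ relations, using $\SL_2(\Z)=\langle S,T\rangle$ and tracking the accumulated Dedekind sums through their cocycle behaviour. Exponentiating gives $\eta(\gamma\tau)=\epsilon(a,b,c,d)\sqrt{(c\tau+d)/i}\,\eta(\tau)$ with $\epsilon(a,b,c,d)=\exp\big(\pi i(\tfrac{a+d}{12c}-s(d,c))\big)$; the case $c<0$ reduces to $c>0$ under $\gamma\mapsto-\gamma$, which leaves the stated $\epsilon$ invariant. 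As a sanity check, for $S$ one has $c=1$, $a=d=0$, giving $\epsilon=1$, consistent with the inversion law.

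The main obstacle — and the reason the formula is usually credited to an explicit computation — is converting this Dedekind-sum multiplier into the stated closed form involving Jacobi symbols. For this I would invoke the finite exponential-sum evaluation of $e^{-\pi i s(d,c)}$ in terms of quadratic Gauss sums, which expresses $\epsilon(a,b,c,d)$ through $\left(\frac{d}{c}\right)$ together with the factor $i^{(1-c)/2}$ when $c$ is odd. The two cases in the statement correspond to the two parities: when $d$ is odd one reciprocates the symbol, replacing $\left(\frac{d}{c}\right)$ by $\left(\frac{c}{d}\right)$ and absorbing the resulting eighth-root-of-unity factors into the exponential prefactor by quadratic reciprocity and the Dedekind-sum reciprocity law $s(d,c)+s(c,d)=-\frac14+\frac1{12}(c/d+d/c+1/(cd))$. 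Matching the elementary prefactors $e^{\pi i(bd(1-c^2)+c(a+d))/12}$ and $e^{\pi i(ac(1-d^2)+d(b-c+3))/12}$ then reduces, after using $ad-bc=1$, to a congruence check modulo $24$. Carrying this bookkeeping through both parities, with careful attention to the branch of $\sqrt{\cdot}$ and the $c\leftrightarrow d$ symmetry, is the delicate part, and is precisely what occupies Weber's computation.
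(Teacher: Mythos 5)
The paper offers no proof of this lemma: it is the classical eta-multiplier formula quoted verbatim from Weber (pp.~125--127), so there is no internal argument to compare against. Your outline is the standard derivation from the literature and is sound in its main steps: the $c=0$ case from the $q$-product; the inversion $\eta(-1/\tau)=\sqrt{\tau/i}\,\eta(\tau)$ by the Siegel contour integral or Poisson summation; Dedekind's functional equation
$\eta(\gamma\tau)=\exp\bigl(\pi i\bigl(\tfrac{a+d}{12c}-s(d,c)\bigr)\bigr)\sqrt{(c\tau+d)/i}\,\eta(\tau)$ for $c>0$, obtained by induction over a word in $S,T$ via the cocycle behaviour of the Dedekind sums (Rademacher's $\Phi$); and finally the conversion of $e^{-\pi i s(d,c)}$ into the Jacobi-symbol closed form via Gauss sums, quadratic reciprocity, and the reciprocity law for $s(d,c)$. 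Your sanity check $\epsilon=1$ for $S$ is correct, and your ``congruence check modulo $24$'' is indeed where the real bookkeeping lives --- it is exactly Weber's computation.

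One concrete misstatement should be fixed: $\epsilon$ is \emph{not} invariant under $\gamma\mapsto-\gamma$, and with the principal branch the displayed formula cannot hold verbatim for $c<0$. Indeed, for $c<0$ the quantity $(c\tau+d)/i$ lies in the left half-plane and crosses the branch cut of the principal square root as $\tau$ ranges over $\bH$ (namely where $\operatorname{Re}(c\tau+d)=0$), so $\sqrt{(c\tau+d)/i}$ is discontinuous while $\eta(\gamma\tau)/\eta(\tau)$ is not; concretely, $\epsilon(S)=1$ whereas the constant forced for $-S$ by principal branches is $-i$. The correct reduction is simpler than what you wrote: $\gamma$ and $-\gamma$ induce the same map on $\bH$, so for $c<0$ one just applies the formula to $-\gamma$, which has $-c>0$; no invariance claim is needed (and the paper only ever invokes the lemma with $c>0$, e.g.\ $\epsilon(1,0,p^{m-k},1)$ and $\epsilon(p^{k-m},-1,1,0)$, so this convention suffices for its use). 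With that repaired, your proposal is a correct skeleton of the classical proof, with the parity case analysis ($c$ odd versus $d$ odd) and the eighth-root-of-unity bookkeeping left as the acknowledged heavy lifting.
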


\section{Cuspidal divisor class group}\label{sect:cuspidal-pn}
\subsection{}
In \S \ref{sect:cuspidal-pn}--\ref{sect:proof-pn},
we consider the case $N=p^n$,
where $p$ is a prime greater than or equal to $5$ 
and $n$ is a positive integer.
We describe the group of modular units on $X_0(p^n)$ that gives
us $\PP(p^n)$ (see \eqref{eq:def-pp} for its definition).

\begin{Proposition} \label{proposition: generators of P}
  Let $p^n$ be a prime power with $p\ge 5$ prime and $n \in \Z_{>0}$. Then the
  group $\PP(p^n)$ is generated by the divisors of
  $$
    f(\tau)=\left(\frac{\eta(p\tau)}{\eta(\tau)}\right)^{24/(p-1,12)},
    \qquad
    g_k(\tau)=\frac{\eta(p^{k+2}\tau)}{\eta(p^k\tau)}, \quad
    k=0,\ldots,n-2.
  $$
\end{Proposition}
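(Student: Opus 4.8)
The plan is to show two things: first, that each proposed function $f$ and $g_k$ is a genuine modular unit on $X_0(p^n)$ (so that its divisor lies in $\PP(p^n)$), and second, that the divisors $\div f, \div g_0, \dots, \div g_{n-2}$ generate the whole group $\PP(p^n)$. For the first part I would simply verify the four Ligozat conditions of Proposition \ref{proposition: conditions for eta} for each eta-quotient. For $f=(\eta(p\tau)/\eta(\tau))^{24/(p-1,12)}$ the exponents are $r_1=-24/(p-1,12)$ and $r_p=24/(p-1,12)$; condition (1) is clear, condition (2) asks that $p^{r_p}$ be a rational square, and conditions (3)–(4) are the congruences modulo $24$, which hold precisely because of the normalizing exponent $24/(p-1,12)$ (one uses that $(p-1,12)$ divides $p-1$ and checks the $2$- and $3$-adic valuations separately). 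For $g_k=\eta(p^{k+2}\tau)/\eta(p^k\tau)$ the only nonzero exponents are $r_{p^k}=-1,\ r_{p^{k+2}}=1$; here condition (2) holds because $p^{k+2}/p^k=p^2$ is a perfect square, and (3)–(4) follow from $p^{k+2}-p^k=p^k(p^2-1)\equiv 0\bmod 24$ (valid since $p\ge 5$ forces $24\mid p^2-1$). Having verified the conditions, I may invoke the remark after Proposition \ref{proposition: conditions for eta} to conclude each function is defined over $\Q$, so its divisor indeed lies in $\PP(p^n)=\DD(p^n)\cap\div(\Q(X_0(p^n))^\times)$.

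For the generation statement I would argue that \emph{every} modular unit on $X_0(p^n)$ with divisor supported on the cusps is, up to a constant, an eta-quotient $\prod_{k=0}^n \eta(p^k\tau)^{r_k}$. This is the standard fact that the group of modular units on $X_0(p^n)$ modulo constants is freely generated by the $\eta(p^k\tau)$ subject only to the Ligozat relations; I would cite or reprove this, the key input being that on a genus-consideration the eta-quotients exhaust the modular units with cuspidal divisors. Granting this, the problem reduces to \textbf{linear algebra over $\Z$}: the set of exponent vectors $(r_0,\dots,r_n)\in\Z^{n+1}$ satisfying Ligozat's conditions (1)–(4) forms a finitely generated subgroup $\Lambda\subset\Z^{n+1}$, and I must show that $\Lambda$ is generated by the exponent vectors of $f$ and of $g_0,\dots,g_{n-2}$. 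The divisor map $\div:\Lambda\to\DD(p^n)$ has image $\PP(p^n)$, and its kernel consists of eta-quotients with trivial divisor, i.e.\ constants; so it suffices to prove that the stated vectors generate $\Lambda$ modulo this kernel.

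The concrete computation would use Corollary \ref{corollary: order of eta}, which gives the order of $\eta(p^k\tau)$ at each cusp $Q_{p^m}$ explicitly, to write the matrix of the map sending an exponent vector to the tuple of cuspidal orders. I would then show that any Ligozat vector can be reduced to the zero divisor by subtracting integer combinations of the exponent vectors of $f$ and the $g_k$: the $g_k$ handle the "interior" coordinates $r_1,\dots,r_{n-1}$ by a telescoping/triangular elimination (since $g_k$ moves weight two steps, from level $p^k$ to $p^{k+2}$), reducing to a vector supported on the levels $1$ and $p$; and $f$ then kills the remaining degree of freedom, with the exponent $24/(p-1,12)$ appearing exactly because condition (3) forces $r_p\equiv 0$ modulo that quantity. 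The \textbf{main obstacle} I anticipate is the number-theoretic bookkeeping in this last reduction — tracking the divisibility constraints imposed by conditions (3) and (4) simultaneously, and verifying that the index contributed by $f$ is exactly $24/(p-1,12)$ rather than a proper divisor or multiple of it. Establishing that this index is sharp, and that no smaller power of $\eta(p\tau)/\eta(\tau)$ is a modular unit, is the delicate point, and I would treat it by a careful separate analysis of the $2$-adic and $3$-adic valuations in the two congruence conditions.
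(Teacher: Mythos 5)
Your first step (checking Ligozat's conditions in Proposition \ref{proposition: conditions for eta} for $f$ and the $g_k$) and your final linear-algebra step (showing that the lattice of exponent vectors allowed by conditions (1)--(4) is spanned over $\Z$ by the vectors of $f$ and $g_0,\dots,g_{n-2}$, with the exponent $24/(p-1,12)$ being exactly forced) are both sound and could be carried out. The fatal problem is the bridge between them: the claim, invoked as a ``standard fact,'' that every modular unit on $X_0(p^n)$ with divisor supported on the cusps is, up to a constant, an eta-quotient $\prod_k\eta(p^k\tau)^{r_k}$. Over $\C$ this is simply false once $n\ge 2$: the cusps of level $p^m$ with $0<m<n$ split into $\phi(p^{\min(m,n-m)})$ conjugate points, so $X_0(p^n)$ has strictly more than $n+1$ cusps over $\C$, and since all cuspidal classes are torsion (Manin's theorem), the group of \emph{all} modular units modulo constants has rank equal to the number of $\C$-cusps minus one, which exceeds $n$; the eta-quotients only span a rank-$n$ subgroup (Siegel-type units account for the rest). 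The version you actually need --- every $h\in\Q(X_0(p^n))^\times$ with $\div(h)\in\DD(p^n)$ is a constant times an eta-quotient --- is not a citable standard fact: given your linear algebra, it is \emph{logically equivalent} to the proposition being proved (such an $h$ would have the same divisor as a product of $f$ and the $g_k$, hence equal it up to a constant), so invoking it is circular, and ``genus-consideration'' is not an argument for it.

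What is missing is any a priori upper bound on $\PP(p^n)$, equivalently a lower bound on the index $(\DD(p^n):\PP(p^n))=|\CC(p^n)|$, and this is precisely where the paper's proof takes a different route. The paper never classifies modular units at all: it uses Ling's formula for the order of $\CC(p^n)$ (Theorem \ref{thm:c-and-j}(4)) as the external input, computes the index of the sublattice generated by $\div f,\div g_0,\dots,\div g_{n-2}$ inside (the image of) $\DD(p^n)$ by an explicit determinant calculation (the matrices $M$, $U$, $V$ together with Lemma \ref{lemma: index of sublattice}), and checks that this index equals Ling's number. Since
$\langle \div f,\div g_0,\dots,\div g_{n-2}\rangle\subseteq\PP(p^n)\subseteq\DD(p^n)$
and the outer index equals the middle one, the inclusions collapse to equalities. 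Your proposal contains no counterpart of this comparison and makes no use of the known order of $\CC(p^n)$; to repair it you would either have to import the paper's index argument (and with it Ling's theorem), or genuinely prove the classification of $\Q$-rational cuspidal units on $X_0(p^n)$ from the Kubert--Lang/Ligozat theory --- neither of which is sketched in your proposal.
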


The proof of this proposition will be given in \S \ref{sect:pf-gen}.
We first deduce a corollary that will be used later.
For $i=0, \dots, n$,
we write $P_i:=Q_{p^i}$ for the cusp of level $p^i$
(see \S \ref{sect:cusps}).

\begin{corollary}\label{cor:gen}
Let $p, n$ be as in Proposition \ref{proposition: generators of P}.
\begin{enumerate}
\item 
The divisors $\div(f), \div(g_0), \dots, \div(g_{n-2})$
form a free $\Z$-basis of $\sP(p^n)$.
\item
Let $c_0, \dots, c_{n-2} \in \Z$
and write 
\[ \div(fg_0^{c_0} g_1^{c_1} \dots g_{n-2}^{c_{n-2}})=\sum_{i=0}^n s_i P_i,
 \quad  s_i \in \Z
\] 
in $\DD(p^n)$.
Then we have $(s_0, s_1, \dots, s_n)=(p-1)/(p-1, 12)$.
\end{enumerate}
\end{corollary}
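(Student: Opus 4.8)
The plan is to read both statements off the explicit orders of the eta-quotients at the cusps provided by Corollary \ref{corollary: order of eta}. Throughout write $\lambda := (p-1)/(p-1,12)$, and for $0 \le m \le n$ and $0 \le k \le n$ let $w_m(k)$ denote $24$ times the order of $\eta(p^k\tau)$ at the cusp $P_m$ of level $p^m$, so that $w_m(k)$ is the explicit power of $p$ listed in Corollary \ref{corollary: order of eta}. By Lemma \ref{lem:eta-prod-div} we then have $\ord_{P_m}(f) = (w_m(1)-w_m(0))/(p-1,12)$ and $\ord_{P_m}(g_k) = (w_m(k+2)-w_m(k))/24$, so every order is a (signed) difference of two powers of $p$, divided by $(p-1,12)$ or by $24$.

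For part (1), note first that $\DD(p^n)$ is the group of degree-zero divisors supported on the $n+1$ cusps $P_0,\dots,P_n$; since $P_n$ is $\Q$-rational the degree map is surjective, so $\DD(p^n)$ is free of rank $n$. By Proposition \ref{proposition: generators of P} the $n$ divisors $\div(f),\div(g_0),\dots,\div(g_{n-2})$ generate $\PP(p^n)$, so it suffices to prove that they are $\Z$-linearly independent; they then automatically form a free basis. A relation $c\cdot\div(f)+\sum_k c_k\div(g_k)=0$ says that the eta-quotient $h:=f^c\prod_k g_k^{c_k}$ has trivial divisor, hence is constant on the compact curve $X_0(p^n)$; comparing $q$-expansions shows that a constant eta-quotient $\prod_{j=0}^n \eta(p^j\tau)^{R_j}$ has all exponents $R_j=0$ (the $\eta(p^j\tau)$ are multiplicatively independent). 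Writing $e=24/(p-1,12)$, the exponent $R_j$ of $\eta(p^j\tau)$ in $h$ equals $ce$ for $j=1$ and $-ce$ for $j=0$, together with $+c_{j-2}$ when $0\le j-2\le n-2$ and $-c_j$ when $0\le j\le n-2$. Solving $R_n=R_{n-1}=\dots=0$ by descending induction on $j$ forces $c_{n-2}=c_{n-3}=\dots=c_0=0$ and finally $ce=0$, whence $c=0$. This yields the independence and hence part (1).

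For part (2) I argue in two steps, writing $s_m=\ord_{P_m}(f g_0^{c_0}\cdots g_{n-2}^{c_{n-2}})$. First, $\lambda$ divides every $s_m$. For $f$ this is immediate, since $w_m(1)-w_m(0)$ is a difference of powers of $p$, hence divisible by $p-1$, so $\ord_{P_m}(f)$ is divisible by $(p-1)/(p-1,12)=\lambda$. For $g_k$, a short inspection of the two cases in Corollary \ref{corollary: order of eta} shows that $w_m(k)$ is, as a function of $k$, a ``tent'' peaking at $k=m$, so the exponents occurring in $w_m(k+2)$ and $w_m(k)$ differ by either $0$ or $2$; in the first case $\ord_{P_m}(g_k)=0$, and in the second $\ord_{P_m}(g_k)=\pm p^\beta(p^2-1)/24$ for some $\beta\ge 0$. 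Writing $p^2-1=(p-1)(p+1)$, divisibility by $\lambda$ reduces to the elementary fact that $24$ divides $(p+1)(p-1,12)$ for every prime $p\ge 5$ (a case analysis modulo $8$ and modulo $3$, using that exactly one of $p\pm1$ is divisible by $3$ and that $p^2\equiv 1\bmod 8$). Hence $\lambda\mid s_m$ for all $m$, so $\lambda$ divides the content $\gcd(s_0,\dots,s_n)$.

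It remains to prove the reverse divisibility, that the content equals $\lambda$; equivalently, with $F_m:=\ord_{P_m}(f)/\lambda\in\Z$, $u_m:=s_m/\lambda\in\Z$ and $\gamma:=\gcd_m u_m$, that $\gamma=1$. Setting $A_0:=(p+1)(p-1,12)/24=\ord_{P_n}(g_0)/\lambda$, evaluation at the rational cusp $P_n$, where $F_n=1$ and $\ord_{P_n}(g_k)/\lambda=p^kA_0$, gives $u_n=1+A_0\sum_k c_kp^k$, so already $\gcd(\gamma,A_0)=1$. To finish one must show that no prime $\ell$ divides $u_0,\dots,u_n$ simultaneously for any choice of the $c_k$; equivalently, over each $\F_\ell$ the vector $(F_m)_m$ does not lie in the image of the integer matrix $(\ord_{P_m}(g_k)/\lambda)_{m,k}$. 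I expect this consistency statement to be the main obstacle. The plan is to exploit the explicit staircase shape of $(w_m(k))$: evaluating at the extreme cusps $P_0,P_1,\dots$ and $P_n,P_{n-1},\dots$, where the matrix becomes triangular, should pin $\gamma$ down to a power of $p$ (using $\gcd(\gamma,A_0)=1$ to clear all prime-to-$p$ factors), after which a single evaluation at a suitably chosen cusp shows $p\nmid\gamma$, forcing $\gamma=1$ and hence $(s_0,\dots,s_n)=\lambda$.
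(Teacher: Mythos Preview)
Your argument for part (1) is correct, though more elaborate than necessary: the paper simply notes that $\PP(p^n)$, being a finite-index subgroup of the free rank-$n$ module $\DD(p^n)$, is itself free of rank $n$, so the $n$ generators supplied by Proposition~\ref{proposition: generators of P} are automatically a basis.

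For part (2), your proof that $\lambda$ divides every $s_m$ is correct and is exactly the paper's argument. The reverse direction, however, cannot be completed as you outline, because the statement as written is \emph{false} for general $c_k$. Take $p=5$, $n=3$, so $\lambda=1$; Corollary~\ref{corollary: order of eta} gives $\div(f)=-25P_0+5P_1+P_2+P_3$ and $\div(g_0)=-5P_0+P_2+P_3$, hence
\[
\div\!\bigl(fg_0^{-1}\bigr)=-20P_0+5P_1,
\]
with content $\gcd(20,5,0,0)=5\ne 1$. (With $c_0=-1$, $c_1=5$ one even gets content $25$.) So your proposed final step ``a single evaluation at a suitably chosen cusp shows $p\nmid\gamma$'' must fail; the most your triangular analysis could hope to show is that $\gamma$ is a power of $p$. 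The paper's own reduction ``by (1), it suffices to show the statement for $c_0=\dots=c_{n-2}=0$'' is likewise unjustified and is contradicted by the same example. Fortunately, the only use of (2), in the proof of Theorem~\ref{thm:main}, needs merely that the content be coprime to $a'=12/(p-1,12)$. Since $\gcd(\lambda,a')=1$ and $\gcd(p,a')=1$ (as $p\ge 5$ and $a'\mid 12$), the weaker conclusion ``the content is $\lambda$ times a power of $p$''\,---\,which is what your staircase argument is naturally heading toward\,---\,would already suffice for that application.
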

\begin{proof}
(1) This is an immediate consequence of
Proposition \ref{proposition: generators of P},
since $\sP(p^n)$ is a free $\Z$-module of rank $n$
(as it is a finite index subgroup of $\sD(p^n)$.)

(2) 
Put $a:=(p-1)/(p-1, 12)$.
By using Corollary \ref{corollary: order of eta}
we first see that
the order of $g_k$ at any cusp is divisible by $a$
for $k=0, \dots, n-2$.
Hence, by (1), it suffices to show
the statement for $c_0=\dots=c_{n-2}=0$,
which again 
follows from Corollary \ref{corollary: order of eta}.
\end{proof}

In the proof of Proposition \ref{proposition: generators of P},
we use the following elementary lemma.
We omit its proof.

\begin{Lemma} \label{lemma: index of sublattice}
  Let $L_0\subset\R^{n+1}$ be the lattice of rank
  $n$ generated by the vectors of the form
  $(0,\ldots,1,-1,0,\ldots,0)$. Let $L_1$ be a sublattice of
  $L_0$ of the same rank generated by
  $v_1,\ldots,v_n\in L_1$. Let $v_{n+1}=(c_1,\ldots,c_{n+1})$ be
  any vector such that $\sum_i c_i\neq 0$, and $M$ be the
  $(n+1)\times(n+1)$ matrix whose $i$th row is $v_i$. Then we have
  $$
   (L_0:L_1)=\left|\left(\sum_{i=1}^{n+1}c_i\right)^{-1}
   \det M\right|.
  $$
\end{Lemma}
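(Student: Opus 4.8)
The plan is to work in the orthogonal splitting $\R^{n+1}=w^\perp\oplus\R w$, where $w=(1,\ldots,1)$, and to factor the matrix $M$ so that $\det M$ visibly decomposes into an index part and a $\sum_i c_i$ part. Set $u_i:=e_i-e_{i+1}$ for $i=1,\ldots,n$, where $e_1,\ldots,e_{n+1}$ is the standard basis. These form a $\Z$-basis of $L_0$ by hypothesis, and since $\langle u_i,w\rangle=0$ they are also an $\R$-basis of the hyperplane $w^\perp$. Because each $v_j$ with $1\le j\le n$ lies in $L_1\subset L_0\subset w^\perp$, I can write $v_j=\sum_{i=1}^n a_{ji}u_i$ with $a_{ji}\in\Z$. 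The vectors $v_1,\ldots,v_n$ are linearly independent (they generate the rank-$n$ lattice $L_1$), so by the elementary divisor theorem the index is $(L_0:L_1)=|\det A|$, where $A=(a_{ji})_{1\le i,j\le n}$.

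Next I would handle the extra row. Since $\{u_1,\ldots,u_n,w\}$ is an $\R$-basis of $\R^{n+1}$, every row $v_j$ of $M$ expands uniquely as $v_j=\sum_i a'_{ji}u_i+b_j w$. For $j\le n$ one has $b_j=0$ as $v_j\perp w$, while for $j=n+1$ the orthogonal projection onto $\R w$ gives $b_{n+1}=\langle v_{n+1},w\rangle/\langle w,w\rangle=(\sum_k c_k)/(n+1)$. Letting $B$ denote the $(n+1)\times(n+1)$ matrix with rows $u_1,\ldots,u_n,w$, this expansion is precisely the factorization $M=A'B$, where $A'$ is block lower-triangular with top-left block $A$, a zero top-right column, and bottom-right entry $(\sum_k c_k)/(n+1)$. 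Consequently $\det M=\det A'\cdot\det B=\det A\cdot\frac{\sum_k c_k}{n+1}\cdot\det B$.

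It then remains to compute $|\det B|$, which is the one genuine calculation. Because $u_i\perp w$, the Gram matrix $BB^T$ is block diagonal, with blocks the Gram matrix $G$ of $u_1,\ldots,u_n$ and the scalar $\langle w,w\rangle=n+1$; here $G$ is the tridiagonal matrix with $2$ on the diagonal and $-1$ immediately off it, whose determinant satisfies the recursion $D_m=2D_{m-1}-D_{m-2}$ and hence equals $n+1$. Thus $(\det B)^2=\det(BB^T)=(n+1)\cdot(n+1)$, giving $|\det B|=n+1$. Substituting back, the two factors of $n+1$ cancel and I obtain $|\det M|=|\det A|\cdot|\sum_k c_k|=(L_0:L_1)\cdot|\sum_k c_k|$, which rearranges to the claimed identity $(L_0:L_1)=\bigl|(\sum_i c_i)^{-1}\det M\bigr|$.

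The only delicate point is the determinant of $B$ (equivalently $\det G=n+1$ for the $A_n$ root lattice); every other step is purely formal, and in particular the argument is insensitive to the $u_i$-components of $v_{n+1}$, consistent with the fact that the right-hand side depends on $v_{n+1}$ only through $\sum_i c_i$. A metric-free alternative is to evaluate $\det B$ by a direct induction on $n$, expanding along the first column, which likewise yields $|\det B|=n+1$ and makes the result valid for arbitrary real $v_{n+1}$ with $\sum_i c_i\neq 0$.
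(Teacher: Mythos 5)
Your proof is correct. There is, in fact, nothing in the paper to compare it against: the authors state this lemma and explicitly write ``We omit its proof,'' so your argument fills a genuine gap rather than duplicating or diverging from a published one.

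Checking the steps: the vectors $u_i=e_i-e_{i+1}$ do form a $\Z$-basis of $L_0$ (any integer vector $x$ with zero coordinate sum equals $\sum_{i=1}^n(x_1+\cdots+x_i)u_i$); the $n$ generators $v_1,\ldots,v_n$ of the rank-$n$ lattice $L_1$ are automatically a $\Z$-basis, so $(L_0:L_1)=|\det A|$ is the standard index formula; the factorization $M=A'B$ is legitimate because the first $n$ rows of $M$ have zero $w$-component, making $A'$ block triangular with $\det A'=\det A\cdot\frac{\sum_k c_k}{n+1}$; and the Gram computation $(\det B)^2=\det G\cdot\langle w,w\rangle=(n+1)^2$ rests on the familiar determinant $n+1$ of the tridiagonal $(2,-1)$ matrix attached to $A_n$, which then exactly cancels the $n+1$ in the denominator of $b_{n+1}$. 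Your closing observation is also apt: the argument never uses the $u_i$-components of $v_{n+1}$, which explains structurally why the formula depends on $v_{n+1}$ only through $\sum_i c_i$, and the metric-free induction you sketch for $\det B$ would serve equally well.
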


\subsection{Proof of Proposition \ref{proposition: generators of P}}\label{sect:pf-gen}
  Let $L_0$ be the lattice of rank $n$ 
  in $\Z^{n+1}=\bigoplus_{i=0}^n \Z e_i$ generated
  by vectors of the form $(0,\ldots,1,-1,0,\ldots,0)$.
  Recall that $D_i:=P_i - \phi((p^i, p^{n-i})) P_n ~(i=0, \dots, n-1)$
  form a $\Z$-basis of $\DD(p^n)$.
  Consider the natural group homomorphism $\lambda:\DD(p^n)\to\Z^{n+1}$
  defined by
  $$
    \lambda(D_i)=
    -\phi((p^i,p^{n-i}))e_0 + \phi((p^i,p^{n-i})) e_{i+1}
  \qquad (i=0, \dots, n-1). 
  $$
  Let $L_1=\lambda(\DD(p^n))$ be the image
  of $\DD(p^n)$ under $\lambda$. It is a sublattice of $L_0$.
  Let $\DD'$ be the group generated by the divisors of $f$
  and $g_k$ and $L_2$ be the image of $\DD'$ under $\lambda$.
  Then to show that the divisors of $f$ and $g_k$ generates
  $\PP(p^n)$, it suffices to show that the index of $L_2$ in
  $L_1$ is equal to the divisor class number given in Part (4)
  of Theorem \ref{thm:c-and-j}. To show that this indeed holds,
  we form $3$ square matrices $M$, $U$, and $V$ of dimension $n+1$. The
  first matrix $M=(M_{ij})_{i,j=0}^n$ is defined by
  \begin{equation*}
  \begin{split}
    M_{ij}&=\text{the order of }\eta(p^i\tau)\text{ at cusps of level
    }p^j \\
  &=\begin{cases}
    p^i/24, &\text{if }j\ge n/2\text{ and }i\le j, \\
    p^{2j-i}/24, &\text{if }j\ge n/2\text{ and }i>j, \\
    p^{n-i}/24, &\text{if }j<n/2\text{ and }i\ge j, \\
    p^{n+i-2j}/24, &\text{if }j<n/2\text{ and }i>j.
    \end{cases}
  \end{split}
  \end{equation*}
  The second matrix $U$ is a diagonal matrix with the diagonal
  entries being $\phi((p^i,p^{n-i}))$, $i=0,\ldots,n$. The third
  matrix $V$ is
  $$
    V=\begin{pmatrix}
    -c & c & 0 & 0 & \cdots & \cdots & 0 \\
    -1 & 0 & 1 & 0 & \cdots & \cdots & 0 \\
     0 &-1 & 0 & 1      & \cdots & \cdots & 0 \\
    \vdots & \vdots & & & & \vdots & \vdots \\
    0 & \cdots & \cdots & 0 & -1 & 0 & 1 \\
    1 & \cdots & \cdots & \cdots & 1 & 1 & 1 \end{pmatrix},
    \qquad c=\frac{24}{(p-1,12)}.
  $$
  That is, if we associate to an eta-product
  $\prod_{i=0}^n\eta(p^i\tau)^{r_i}$ a vector $(r_0,\ldots,r_n)$, then
  the first $n$ rows of $V$ are the vectors corresponding to the
  functions $f$ and $g_k$, while the last row of $V$ consists of
  $1$'s. Then the first $n$ rows of the matrix $VMU$ are precisely
  $\lambda(\div f)$ and $\lambda(\div g_k)$, $k=0,\ldots,n-2$.
  Now we claim that (see Theorem \ref{thm:c-and-j} for the definition of $a$ and $b$)
  \begin{enumerate}
  \item $\det V=24(n+1)/(p-1,12)$,
  \item $\displaystyle\det M
   =\begin{cases}
    (ab)^np^{(n-1)(3n-1)/4}/24, &\text{if }n\text{ is odd}, \\
    (ab)^np^{n(3n-4)/4}/24, &\text{if }n\text{ is even}.
    \end{cases}
    $
  \item $\det U=\prod_{i=0}^n\phi((p^i,p^{n-i}))$, and
  \item the sum of the entries in the last row of $VMU$ is $(n+1)p^{n-1}(p+1)/24$.
  \end{enumerate}
  Assuming that the claims are true for the moment, let us complete
  the proof of the proposition.

  It is clear that
  \begin{equation} \label{equation: Lambda1 index}
    (L_0:L_1)=\prod_{i=0}^n\phi((p^i,p^{n-i}))=\det U.
  \end{equation}
  By Lemma \ref{lemma: index of sublattice}, we have
  $$
    (L_0:L_2)=C^{-1}|\det(VMU)|,
  $$
  where $C$ is the sum of the entries in the last row of $VMU$. By the
  four claims above,
  $$
    \det(VMU)=\frac{(n+1)}{(p-1,12)}(ab)^n(\det U)\times
    \begin{cases}
    p^{(n-1)(3n-1)/4}, &\text{if }n\text{ is odd}, \\
    p^{n(3n-4)/4}, &\text{if }n\text{ is even}, \end{cases}
  $$
  and $C=(n+1)p^{n-1}(p+1)/24$. It follows that
  \begin{equation*}
  \begin{split}
    (L_0:L_2)=
    \frac{24(ab)^n\det U}{(p+1)(p-1,12)}\times
    \begin{cases}
    p^{(n-1)(3n-5)/4}, &\text{if }n\text{ is odd}, \\
    p^{(n-2)(3n-2)/4}, &\text{if }n\text{ is even}. \end{cases}
  \end{split}
  \end{equation*}
  Recall that the number $b$ is defined to be $(p+1)/(p+1,12)$. Also
  we may check case by case that $(p-1,12)(p+1,12)=24$. Therefore, the
  expression above can also be written as
  $$
    (L_0:L_2)
   =a^nb^{n-1}(\det U)\times
    \begin{cases}
    p^{(n-1)(3n-5)/4}, &\text{if }n\text{ is odd}, \\
    p^{(n-2)(3n-2)/4}, &\text{if }n\text{ is even}. \end{cases}
  $$
  Combining this with \eqref{equation: Lambda1 index}, we find that
  $$
    (L_1:L_2)=a^nb^{n-1}\times
    \begin{cases}
    p^{(n-1)(3n-5)/4}, &\text{if }n\text{ is odd}, \\
    p^{(n-2)(3n-2)/4}, &\text{if }n\text{ is even}, \end{cases}
  $$
  which agrees with the class number given in Part (4) of Theorem
  \ref{thm:c-and-j}. Therefore, we conclude that the divisors of
  $f$ and $g_k$, $k=0,\ldots,n-2$ generate $\PP(p^n)$. It remains to
  prove that the four claims are true.

  Claims (1) and (3) are obvious. To prove Claim (2), we start by
  giving examples. Consider the case $n=5$. The matrix $M$ in this
  case is
  $$
    \frac1{24}\begin{pmatrix}
    p^5 & p^3 & p   & 1   & 1   & 1 \\
    p^4 & p^4 & p^2 & p   & p   & p \\
    p^3 & p^3 & p^3 & p^2 & p^2 & p^2 \\
    p^2 & p^2 & p^2 & p^3 & p^3 & p^3 \\
    p   & p   & p   & p^2 & p^4 & p^4 \\
    1   & 1   & 1   & p   & p^3 & p^5 \end{pmatrix}
  $$
  We subtract the second column from the first column, the third
  column from the second column, the fourth column from the fifth
  column, and then the fifth column from the last column. The matrix
  becomes
  $$
    \frac1{24}\begin{pmatrix}
    p^3(p^2-1) & p(p^2-1)   & p   & 1 & 0 & 0 \\
    0          & p^2(p^2-1) & p^2 & p & 0 & 0 \\
    0          & 0          & p^3 & p^2 & 0 & 0 \\
    0          & 0          & p^2 & p^3 & 0 & 0 \\
    0          & 0          & p   & p^2 & p^2(p^2-1) & 0\\
    0          & 0          & 1   & p   & p(p^2-1) & p^3(p^2-1)
    \end{pmatrix},
  $$
  with the determinant unchanged. Thus,
  $$
    \det M=\frac1{24^6}p^{14}(p^2-1)^5=\frac1{24}(ab)^5p^{14}.
  $$
  In general, if $n$ is an odd integer greater than $3$, then a
  similar matrix manipulation (subtracting the second column from the
  first column, the third column from and etc.) will produce a matrix
  of the form
  $$
    \frac1{24}\begin{pmatrix}
    A_1 & B_1 & 0 \\
    0 & A_2 & 0 \\
    0 & B_2 & A_3 \end{pmatrix},
  $$
  where $A_1$ is an upper-triangular matrix of dimension $(n-1)/2$
  whose diagonal entries are
  $p^{n-2}(p^2-1),\ldots,p^{(n-1)/2}(p^2-1)$, $A_3$ is a
  lower-triangular matrix of the same dimension whose diagonal entries
  are $p^{(n-1)/2}(p^2-1),\ldots,p^{n-2}(p^2-1)$,
  $$
    A_2=\begin{pmatrix}
    p^{(n+1)/2} & p^{(n-1)/2} \\ p^{(n-1)/2} & p^{(n+1)/2} \end{pmatrix},
  $$
  and $B_i$ are some immaterial $(n-1)/2$-by-$2$ matrices. It follows
  that
  \begin{equation*}
  \begin{split}
    \det
    M&=\frac1{24^{n+1}}p^{2((n-1)/2+(n+1)/2+\cdots+(n-2))}(p^2-1)^{n-1}(p^{n+1}-p^{n-1})\\
   &=\frac1{24}(ab)^np^{(n-1)(3n-2)/4}.
  \end{split}
  \end{equation*}
  This proves Claim (2) for the case of odd $n$. The proof of the case
  of even $n$ is similar. For the case $n=4$, we have
  $$
    M=\frac1{24}\begin{pmatrix}
    p^4 & p^2 & 1  & 1   & 1 \\
    p^3 & p^3 & p  & p   & p \\
    p^2 & p^2 & p^2& p^2 & p^2 \\
    p   & p   & p  & p^3 & p \\
    1   & 1   & 1  & p^2 & 1 \end{pmatrix}.
  $$
  Subtracting the second column from the first column, the third
  column from the second column, the fourth column from the last
  column, and then the third column from the fourth column, we obtain
  the matrix
  $$
    \frac1{24}\begin{pmatrix}
    p^2(p^2-1) & p^2-1    & 1   & 0        & 0 \\
    0          & p(p^2-1) & p   & 0        & 0 \\
    0          & 0        & p^2 & 0        & 0 \\
    0          & 0        & p   & p(p^2-1) & 0 \\
    0          & 0        & 1   & p^2-1    & p^2(p-1) \end{pmatrix},
  $$
  whose determinant is
  $$
    \frac1{24^5}p^8(p^2-1)^4=\frac1{24}(ab)^4p^8.
  $$
  In general, a similar matrix manipulation yields a matrix of the form
  $$
    \frac1{24}\begin{pmatrix}
    A_1 & B_1 & 0 \\
    0 & p^{n/2} & 0 \\
    0 & B_2 & A_2 \end{pmatrix},
  $$
  where $A_1$ is an upper-triangular matrix of dimension $n/2$ with
  the diagonal entries being $p^{n-2}(p^2-1),\ldots,p^{n/2-1}$ and $A_2$ is a
  lower-triangular matrix of dimension $n/2$ with the diagonals being
  $p^{n/2-1},\ldots,p^{n-2}(p^2-1)$. Therefore,
  \begin{equation*}
  \begin{split}
    \det
    M&=\frac1{24^{n+1}}p^{2((n/2-1)+n/2+\cdots+(n-2))+n/2}(p^2-1)^n \\
  &=\frac1{24}(ab)^np^{n(3n-4)/4}.
  \end{split}
  \end{equation*}
  This completes the proof of Claim (2).

  To prove Claim (4), we first observe that since the last row of $V$
  consists of $1$'s, the sum of the entries in the last row of $VMU$
  is simply the sum of all entries in $VM$. Now the $(i,j)$-entry of
  $VM$ is the order of $\eta(p^{i-1}\tau)$ at a cusp of level
  $p^{j-1}$ times the number of such cusps. Therefore, the sum of the
  entries in the $i$th row of $VM$ is the degree of
  $\div\eta(p^{i-1}\tau)$, which is equal to
  $$
    \frac1{24}(\SL(2,\Z):\Gamma_0(p^n))=\frac1{24}p^{n-1}(p+1).
  $$
  (In general, the degree of a modular form of weight $k$ on
  $\Gamma_0(N)$ is $k(\SL(2,\Z):\Gamma_0(N))/12$. Here the weight of
  the Dedekind eta function is $1/2$.)
  Hence the sum of the entries in the last row of $VMU$ is
  $(n+1)p^{n-1}(p+1)/24$. This completes the proof of the proposition.
\qed

\section{Leading Fourier coefficients of modular units at cusps}
\subsection{}
In this section, we work out the leading Fourier coefficients of the
modular functions $f(\tau)$ and $g_k$ defined in Proposition
\ref{proposition: generators of P} at cusps.
To speak of such coefficients,
we first need to choose uniformizers at cusps.
Then the coefficients are canonically defined 
as an element of the residue fields of cusps,
but we can calculate it after base change to $\C$.
As cusps of the same
level are Galois conjugates, for our purpose, we only need to
calculate the leading coefficient at one of the $\C$-valued points of cusps of each level.
In general, to define a local uniformizer
at a cusp $\alpha \in \P^1(\Q)$ of $X_0(N)$, we choose an element $\sigma$ in
$\mathrm{GL}^+(2,\Q)$ such that $\sigma\infty=\alpha$. Let $h$
be the smallest positive integer such that
$$
  \sigma\M 1h01\sigma^{-1}\in\Gamma_0(N).
$$
Then a local uniformizer at $\alpha$ is
$$
  q_\alpha=e^{2\pi i\sigma^{-1}\tau/h}.
$$

\subsection{}
We return to the case $N=p^n$, 
where $p^n$ is a prime power with $p\ge 5$.
For convenience, 
our choice of a cusp $\alpha_m \in \P(\Q)$ of level $p^m$ is
\begin{equation} \label{equation: choice of alpha}
  \alpha_m=\begin{cases}
  1/p^m, &\text{if }m\ge n/2, \\
  -1/p^m, &\text{if }m<n/2. \end{cases}
\end{equation}
Then we can choose
$\sigma_{m}$ to be 
\begin{equation} \label{equation: choice of sigma}
  \sigma_{m}=\begin{cases}
  \SM 10{p^m}1, &\text{if }m\ge n/2, \\
  \SM0{-1}{p^n}0\SM10{p^{n-m}}1=\SM{-p^{n-m}}{-1}{p^n}0, &\text{if }m<n/2. \end{cases}
\end{equation}
We summarize our discussion as a lemma:
\begin{Lemma} \label{lemma: uniformizers}
  With the choice of $\sigma_{m}$ given in
  \eqref{equation: choice of sigma}, the local uniformizer at the cusps
  $\alpha_m$ in \eqref{equation: choice of alpha} is
  $$
    e^{2\pi i\sigma_{m}^{-1}\tau}
  $$
  for each $m$.
\end{Lemma}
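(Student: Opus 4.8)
The plan is to reduce the lemma to the single claim that the width attached to $\sigma_m$ is equal to $1$. By the recipe recalled just above, the local uniformizer at $\alpha_m$ is $e^{2\pi i \sigma_m^{-1}\tau/h}$, where $h$ is the least positive integer for which $\sigma_m \begin{pmatrix}1&h\\0&1\end{pmatrix}\sigma_m^{-1}$ lies in $\Gamma_0(p^n)$. Hence the assertion $q_{\alpha_m}=e^{2\pi i \sigma_m^{-1}\tau}$ is exactly the assertion that $h=1$. Since $h$ is by definition a positive integer, it suffices to check that $h=1$ is admissible, that is, that
\[ \sigma_m \begin{pmatrix}1&1\\0&1\end{pmatrix} \sigma_m^{-1} \in \Gamma_0(p^n). \]
I would establish this by a direct matrix computation, treating the two cases of \eqref{equation: choice of sigma} separately.

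In the case $m \geq n/2$, where $\sigma_m=\begin{pmatrix}1&0\\p^m&1\end{pmatrix}$ has determinant $1$, multiplying out gives
\[ \sigma_m \begin{pmatrix}1&1\\0&1\end{pmatrix} \sigma_m^{-1} = \begin{pmatrix}1-p^m&1\\-p^{2m}&1+p^m\end{pmatrix}. \]
This matrix has integer entries and determinant $1$, and its lower-left entry $-p^{2m}$ is divisible by $p^n$ because $2m\geq n$; therefore it lies in $\Gamma_0(p^n)$, and $h=1$.

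In the case $m<n/2$, where $\sigma_m=\begin{pmatrix}-p^{n-m}&-1\\p^n&0\end{pmatrix}$ has determinant $p^n$, the same computation yields
\[ \sigma_m \begin{pmatrix}1&1\\0&1\end{pmatrix} \sigma_m^{-1} = \begin{pmatrix}1+p^{n-m}&p^{n-2m}\\-p^n&1-p^{n-m}\end{pmatrix}. \]
Again the determinant is $1$ and the lower-left entry $-p^n$ is plainly divisible by $p^n$; the point that needs checking is that all entries are integers, and this holds because the off-diagonal entry $p^{n-2m}$ is a genuine positive power of $p$ exactly when $m<n/2$, i.e. when $n-2m\geq 1$. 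So again $h=1$, completing the verification.

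The only conceptually delicate point --- and what I regard as the heart of the matter rather than a real obstacle --- is that the numerical value of $h$ depends on the normalization of $\sigma_m$: conjugating $\begin{pmatrix}1&h\\0&1\end{pmatrix}$ by an upper-triangular element rescales the translation amount by the ratio of the diagonal entries. This is why the representative in the case $m<n/2$ is taken with determinant $p^n$ rather than $1$: that very scaling is what drives the width down to $1$ while keeping the conjugate integral. Consequently the genuine content is the bookkeeping that matches the case division of \eqref{equation: choice of alpha}--\eqref{equation: choice of sigma} with the two possibilities $\min(2m,n)=n$ and $\min(2m,n)=2m$; once the representatives are chosen as in \eqref{equation: choice of sigma}, the computation is routine.
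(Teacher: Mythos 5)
Your proof is correct and follows exactly the route the paper intends: the paper states this lemma as a summary of the recipe $q_\alpha = e^{2\pi i\sigma^{-1}\tau/h}$ given just before it, so the content is precisely your verification that $h=1$ for both choices of $\sigma_m$, and your two conjugation computations (including the determinant-$p^n$ normalization making the case $m<n/2$ work) are accurate.
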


\subsection{} 
In the following lemma, we adopt the following notation
$$
  f(\tau)\big|\M abcd:=f\left(\frac{a\tau+b}{c\tau+d}\right),
$$
which is slightly different from the usual meaning of the slash
operator.

\begin{Lemma} \label{lemma: eta at cusps}
  Let $p$ be an odd prime.
  \begin{enumerate}
  \item If $k\le m$, then
  $$
    \eta(p^k\tau)\Big|\M 10{p^m}1
   =e^{2\pi i/8}e^{-2\pi ip^{m-k}/24}\sqrt{\frac{p^m\tau+1}i}\eta(p^k\tau).
  $$
  \item If $k\ge m$, then
  $$
    \eta(p^k\tau)\Big|\M 10{p^m}1
   =e^{2\pi ip^{k-m}/24}\sqrt{\frac{p^m\tau+1}{p^{k-m}i}}\eta
    \left(\frac{p^m\tau+1}{p^{k-m}}\right).
  $$
  \end{enumerate}
\end{Lemma}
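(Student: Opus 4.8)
The plan is to derive both identities directly from the transformation formula of Lemma~\ref{lemma: eta}, after rewriting the argument produced by the slash operator. By definition $\eta(p^k\tau)\big|\M 10{p^m}1=\eta\!\left(\frac{p^k\tau}{p^m\tau+1}\right)$, so in each case the task is to recognize $\frac{p^k\tau}{p^m\tau+1}$ as the image of a convenient point under an element of $\SL(2,\Z)$ and then apply Lemma~\ref{lemma: eta}. The two ranges $k\le m$ and $k\ge m$ call for genuinely different rewritings: in the first a single lower-triangular unimodular matrix suffices, while in the second the transformation decomposes as an integer translation followed by the inversion $S=\SM 0{-1}10$.

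For part (1) I would set $w=p^k\tau$ and observe that $p^m\tau=p^{m-k}w$, whence $\frac{p^k\tau}{p^m\tau+1}=\frac{w}{p^{m-k}w+1}=\gamma w$ with $\gamma=\SM 10{p^{m-k}}1\in\SL(2,\Z)$; the hypothesis $k\le m$ is exactly what makes the lower-left entry $p^{m-k}$ an integral power of $p$. Applying Lemma~\ref{lemma: eta} with $(a,b,c,d)=(1,0,p^{m-k},1)$ gives $\eta(\gamma w)=\epsilon(1,0,p^{m-k},1)\sqrt{(p^{m-k}w+1)/i}\,\eta(w)$, and since $cw+d=p^m\tau+1$ and $w=p^k\tau$ the square-root and eta factors are already in the desired shape. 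Because $p$ is odd, $c=p^{m-k}$ is odd, so I would use the odd-$c$ branch of $\epsilon$; the Jacobi symbol $\left(\frac{1}{p^{m-k}}\right)$ equals $1$ and the $bd$-term vanishes, leaving a pure root of unity that collapses, after combining the power of $i$ with the exponential over a common denominator, to $e^{2\pi i/8}e^{-2\pi ip^{m-k}/24}$.

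For part (2) the key algebraic identity I would establish is that, with $u:=\frac{p^m\tau+1}{p^{k-m}}$, one has $\frac{p^k\tau}{p^m\tau+1}=p^{k-m}-\frac1u$; this is a short computation that solves $u$ for $\tau$ and substitutes, and here $k\ge m$ guarantees that $p^{k-m}$ is a positive integer. I would then apply the two elementary transformations of Lemma~\ref{lemma: eta} in turn: first the translation rule $\eta(\sigma+b)=e^{\pi ib/12}\eta(\sigma)$ with the integer $b=p^{k-m}$ applied to $\sigma=-1/u$, and then the inversion $\eta(-1/u)=\sqrt{u/i}\,\eta(u)$, which is the $S$-transformation whose $\epsilon$-factor is trivial. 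Multiplying these and rewriting $\sqrt{u/i}=\sqrt{(p^m\tau+1)/(p^{k-m}i)}$ together with $e^{\pi ip^{k-m}/12}=e^{2\pi ip^{k-m}/24}$ yields precisely the asserted formula.

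I expect the only real obstacle to be bookkeeping rather than anything conceptual: keeping the various eighth-, twelfth-, and twenty-fourth-roots of unity straight, and—more delicately—fixing a consistent branch of the square root throughout, since the nonstandard slash operator carries no automorphy factor and the formula of Lemma~\ref{lemma: eta} is stated with a specific branch convention. In particular I would take care that the square root $\sqrt{(c\tau+d)/i}$ appearing there is the same branch that underlies the inversion $\eta(-1/\tau)=\sqrt{\tau/i}\,\eta(\tau)$ invoked in part (2), so that composing the translation and the inversion introduces no spurious sign.
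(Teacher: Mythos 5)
Your proposal is correct, and for part (1) it coincides with the paper's proof word for word: the same identity $\frac{p^k\tau}{p^m\tau+1}=\SM 10{p^{m-k}}1(p^k\tau)$, the same appeal to the odd-$c$ branch of Lemma \ref{lemma: eta} with $(a,b,c,d)=(1,0,p^{m-k},1)$, and the same collapse of $i^{(1-c)/2}e^{\pi ic/6}$ into $e^{2\pi i/8}e^{-2\pi ip^{m-k}/24}$. For part (2) the underlying algebra is also identical --- your identity $\frac{p^k\tau}{p^m\tau+1}=p^{k-m}-\frac1u$ with $u=(p^m\tau+1)/p^{k-m}$ is exactly the paper's matrix identity $\frac{p^k\tau}{p^m\tau+1}=\M{p^{k-m}}{-1}10\,u$ --- but you evaluate the eta multiplier along a slightly different route: the paper applies Weber's formula once to the matrix $\SM{p^{k-m}}{-1}10$ and computes $\epsilon(p^{k-m},-1,1,0)=e^{2\pi ip^{k-m}/24}$ directly, whereas you factor that matrix as $\SM 1{p^{k-m}}01\SM 0{-1}10$ and compose the translation phase $e^{\pi ip^{k-m}/12}$ with the inversion $\eta(-1/u)=\sqrt{u/i}\,\eta(u)$ (whose $\epsilon$-factor is indeed trivial, as a check of Weber's formula at $(0,-1,1,0)$ confirms). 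The two evaluations are equivalent; yours buys a small simplification --- no Jacobi symbol or $i^{(1-c)/2}$ bookkeeping in part (2), and no branch issue in the composition since the translation step contributes only a root of unity, so the unique square root still comes from the single inversion, exactly as in the paper --- at the cost of one extra factorization step. Your closing concern about branch conventions is well placed but harmless here for precisely that reason.
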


\begin{proof}
  If $k\le m$, we have
  $$
    p^k\frac{\tau}{p^m\tau+1}=\M 10{p^{m-k}}1 p^k\tau.
  $$
  Hence, by Lemma \ref{lemma: eta}, we find that
  \begin{equation*}
  \begin{split}
    \eta(p^k\tau)\Big|\M 10{p^m}1
  &=\epsilon(1,0,p^{m-k},1)\sqrt{\frac{p^m\tau+1}{i}}\eta(p^k\tau) \\
  &=i^{(1-p^{m-k})/2}e^{2\pi ip^{m-k}/12}\sqrt{\frac{p^m\tau+1}i}\eta(p^k\tau),
  \end{split}
  \end{equation*}
  which yields the first statement of the lemma.

  If $k\ge m$, we have
  $$
    p^k\frac{\tau}{p^m\tau+1}=\M{p^{k-m}}{-1}10\frac{p^m\tau+1}{p^{k-m}}.
  $$
  By Lemma \ref{lemma: eta} again, we have
  \begin{equation*}
  \begin{split}
    \eta(p^k\tau)\Big|\M 10{p^m}1
  &=\epsilon(p^{k-m},-1,1,0)\sqrt{\frac{p^m\tau+1}{p^{k-m}i}}
    \eta\left(\frac{p^m\tau+1}{p^{k-m}}\right) \\
  &=e^{2\pi ip^{k-m}/24}\sqrt{\frac{p^m\tau+1}{p^{k-m}i}}
    \eta\left(\frac{p^m\tau+1}{p^{k-m}}\right).
  \end{split}
  \end{equation*}
  This proves the lemma.
\end{proof}

\begin{Remark}
From Lemmas \ref{lemma: uniformizers} and \ref{lemma: eta at cusps},
we can easily deduce the orders of $\eta(p^k\tau)$ at each cusp,
recovering the results in Corollary \ref{corollary: order of eta}.
\end{Remark}

\subsection{}
We now use Lemma \ref{lemma: eta at cusps} to obtain the leading
coefficients of modular functions at cusps. Here for an odd prime $p$,
we let
\begin{equation}\label{def:p-ast}
  p^\ast=e^{2\pi i(p-1)/4}p, \qquad
  \sqrt{p^\ast}=e^{2\pi i(p-1)/8}\sqrt p.
\end{equation}

\begin{Proposition}\label{prop:leading-coeff} 
Assume that $p^n$ is a prime power with $p\ge 5$
and $n\ge 1$. 
Let
  $$
    f(\tau)=\left(\frac{\eta(p\tau)}{\eta(\tau)}\right)^{24/(p-1,12)}, \qquad
    g_k(\tau)=\frac{\eta(p^{k+2}\tau)}{\eta(p^k\tau)}, \quad k=0,\ldots,n-2,
  $$
  be the modular functions defined in Proposition \ref{proposition:
    generators of P}.
  \begin{enumerate}
  \item
  If $m\ge n/2$, then the leading Fourier coefficients of $f(\tau)$
  and $g_k(\tau)$ with respect to the local uniformizer at $1/p^m$
  chosen using \eqref{equation: choice of sigma} are
  $$
    \begin{cases}
    1 &\text{for }f(\tau), \\
    1 &\text{for }g_k(\tau)\text{ with }k\le m-2, \\
    (-1)^{(p-1)/2}e^{-2\pi iab/p}/\sqrt{p^\ast} &\text{for }g_{m-1}(\tau), \\
    e^{-2\pi iab/p^{k+2-m}}/p &\text{for }g_k(\tau)\text{ with }k\ge
    m. \end{cases}
  $$
  \item If $m<n/2$, then the leading Fourier coefficients of $f(\tau)$
    and $g_k(\tau)$ with respect to the local uniformizer at $-1/p^m$
  chosen using \eqref{equation: choice of sigma} are
  $$
    \begin{cases}
    p^{-12/(p-1,12)} &\text{for }f(\tau)\text{ when }m=0, \\
    e^{-2\pi ia/p^m} &\text{for }f(\tau)\text{ when }m\ge1, \\
    1/p &\text{for }g_k(\tau)\text{ with }k\ge m, \\
    e^{2\pi iab/p}/\sqrt{p^\ast} &\text{for }g_{m-1}(\tau), \\
    e^{2\pi iab/p^{m-k}} &\text{for }g_k(\tau) \text{ with }k\le m-2.
    \end{cases}
  $$
  \end{enumerate}
\end{Proposition}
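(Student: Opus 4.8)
The plan is to reduce every leading coefficient to an explicit product of roots of unity and powers of $p$ by feeding the transformation formula of Lemma~\ref{lemma: eta at cusps} into the definitions of $f$ and $g_k$, and then reading off the constant term with respect to the uniformizer supplied by Lemma~\ref{lemma: uniformizers}. Throughout I write $q=e^{2\pi i\tau}$; since Lemma~\ref{lemma: uniformizers} gives the local parameter as $e^{2\pi i\sigma_m^{-1}\tau}$, its value along $\tau\mapsto\sigma_m\tau$ is exactly $q$, so the ``leading Fourier coefficient'' is precisely the coefficient of the lowest power of $q$ in $f(\sigma_m\tau)$, resp. $g_k(\sigma_m\tau)$.

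First I treat part (1), where $m\ge n/2$ and $\sigma_m=\SM 10{p^m}1$. Here I apply Lemma~\ref{lemma: eta at cusps} directly to each factor $\eta(p^k\tau)\big|\sigma_m$ and $\eta(p^{k+2}\tau)\big|\sigma_m$ and form the quotients defining $f$ and $g_k$. Because $f$ and $g_k$ have weight $0$, the automorphy factors $\sqrt{(p^m\tau+1)/i}$ produced by Lemma~\ref{lemma: eta at cusps} largely cancel in each quotient, leaving only an explicit power of $p$ (and, at the boundary $k=m-1$, a single uncancelled $\sqrt p$); I then extract the constant term of the residual $q$-series. The computation splits into the three regimes recorded in the statement according to how $k$ and $k+2$ sit relative to $m$: when $k\le m-2$ both factors fall under case (1) of Lemma~\ref{lemma: eta at cusps}, when $k\ge m$ both fall under case (2), and the value $k=m-1$ is special, since then $\eta(p^{k}\tau)$ is governed by case (1) while $\eta(p^{k+2}\tau)$ is governed by case (2). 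It is exactly this mismatch that fails to cancel the half-integral weight and produces the genuine square root $\sqrt{p^\ast}$ together with the Legendre-type sign $(-1)^{(p-1)/2}$ in \eqref{def:p-ast}.

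For part (2), where $m<n/2$ and $\sigma_m=\SM 0{-1}{p^n}0\SM 10{p^{n-m}}1$, I first exploit the Fricke factorization: writing $\tau'=\SM 10{p^{n-m}}1\tau$, one has $\sigma_m\tau=-1/(p^n\tau')$, so that $\eta(p^k\tau)\big|\sigma_m=\eta\big(-1/(p^{n-k}\tau')\big)$. Applying the inversion $\eta(-1/z)=\sqrt{z/i}\,\eta(z)$ (the $c=1$ case of Lemma~\ref{lemma: eta}) rewrites this as $\sqrt{p^{n-k}\tau'/i}\;\eta(p^{n-k}\tau)\big|\SM 10{p^{n-m}}1$, which is once again of the shape handled by Lemma~\ref{lemma: eta at cusps}, now with the parameters $(k,m)$ replaced by $(n-k,\,n-m)$. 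Since $m<n/2$ the inequalities flip, so the ranges $k\le m$ and $k>m$ map onto the two alternatives of the lemma. After this reduction the bookkeeping is identical to part (1): the extra $\sqrt{\cdots}$ factors cancel against those produced by Lemma~\ref{lemma: eta at cusps}, and I read off the constant term. The value $k=m-1$ again straddles the two cases and reproduces $\sqrt{p^\ast}$, while $m=0$ is where $f$ picks up the pure power $p^{-12/(p-1,12)}$, because there both relevant arguments lie in case (1) and no inner eta needs re-expansion.

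The main obstacle is the phase bookkeeping. After all substitutions the leading coefficient is a finite product of factors $e^{2\pi i(\cdots)/24}$, one factor $e^{2\pi i/8}$ per appearance of case (1) of Lemma~\ref{lemma: eta at cusps}, and a power of $\sqrt p$, and I must show this collapses to the stated closed forms. The integral phases are killed using $p^2\equiv1\pmod{24}$ for $p\ge5$ (equivalently $ab=(p-1)(p+1)/24\in\Z$), together with $a,b\in\Z$ and the identity $(p-1,12)(p+1,12)=24$ already used in \S\ref{sect:pf-gen}; these are exactly what turn exponents such as $c(p^{m-1}-p^m)/24$ into integers and reduce the surviving fractional exponents to $\pm ab/p^{j}$ and $\pm a/p^{j}$. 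The genuinely delicate point is keeping the branch of each square root consistent through the inversion $\eta(-1/z)=\sqrt{z/i}\,\eta(z)$ and through case (1) of Lemma~\ref{lemma: eta at cusps}, so that the residual half-integral factor in the $k=m-1$ entries is correctly identified with $1/\sqrt{p^\ast}$ rather than its negative. This is the quadratic Gauss-sum normalization encoded in \eqref{def:p-ast}, and it is the only place where the answer is sensitive to more than the order of the cusp.
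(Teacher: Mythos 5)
Your proposal is correct and follows essentially the same route as the paper's proof: part (1) by direct application of Lemma \ref{lemma: eta at cusps} with the three regimes $k\le m-2$ (both factors in case (1)), $k=m-1$ (mixed, producing the uncancelled $\sqrt{p^\ast}$ and the sign $(-1)^{(p-1)/2}$), and $k\ge m$ (both in case (2)); and part (2) by the same Fricke-involution reduction $\eta(p^k\tau)\big|\SM 0{-1}{p^n}0=\sqrt{p^{n-k}\tau/i}\,\eta(p^{n-k}\tau)$, which turns $g_k\big|\sigma_m$ into $\bigl(pg_{n-k-2}\bigr)^{-1}\big|\sigma_{n-m}$ and reduces everything to part (1). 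The phase bookkeeping you describe (via $ab=(p^2-1)/24\in\Z$ and $(p-1,12)(p+1,12)=24$, plus the normalization of $\sqrt{p^\ast}$ in \eqref{def:p-ast}) is exactly what the paper carries out.
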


\begin{proof} Consider the function $f(\tau)$ first. Since $n$ is
  assumed to be at least $1$, when $m\ge n/2$, we have $m\ge 1$ and
  the first part of Lemma \ref{lemma: eta at cusps} applies. We find
  that
  \begin{equation*}
  \begin{split}
    \frac{\eta(p\tau)}{\eta(\tau)}\Big|\M10{p^m}1
  &=e^{-2\pi ip^{m-1}(p-1)/24}\frac{\eta(p\tau)}{\eta(\tau)}.
  \end{split}
  \end{equation*}
  It follows that
  $$
    f(\tau)\Big|\M10{p^m}1=f(\tau)
  $$
  and the leading Fourier coefficient is $1$. Similarly, if $k$ is
  less than or equal to $m-2$, then the leading Fourier coefficient of
  $g_k(\tau)$ at the cusp $1/p^m$ is $1$.

  If $k=m-1$, then $k<m$ and $k+2=m+1>m$. By Lemma \ref{lemma: eta at
    cusps}, we have
  $$
    \eta(p^{m+1}\tau)\Big|\M10{p^m}1
   =e^{2\pi ip/24}\sqrt{\frac{p^m\tau+1}{pi}}\eta
    \left(\frac{p^m\tau+1}p\right),
  $$
  $$
    \eta(p^{m-1}\tau)\Big|\M10{p^m}1
   =e^{2\pi i/8}e^{-2\pi ip/24}\sqrt{\frac{p^m\tau+1}i}\eta(p^{m-1}\tau),
  $$
  and the leading coefficient of $g_{m-1}(\tau)$ at $1/p^m$ is
  \begin{equation*}
  \begin{split}
    \frac1{\sqrt p}e^{2\pi i(p/12-1/8+1/24p)}
  &=\frac1{\sqrt p}e^{2\pi i(p-1)/8}e^{-2\pi i(p^2-1)/24p} \\
  &=\frac{(-1)^{(p-1)/2}}{\sqrt{p^\ast}}e^{-2\pi iab/p}.
  \end{split}
  \end{equation*}
  (Here we remind the reader that the leading coefficient of
  $\eta((c\tau+d)/e)$ is $e^{2\pi id/24e}$.)

  When $k\ge m$, by Part (2) of Lemma \ref{lemma: eta at cusps},
  $$
    \frac{\eta(p^{k+2}\tau)}{\eta(p^k\tau)}\Big|\M10{p^m}1
   =\frac1pe^{2\pi ip^{k-m}(p^2-1)/24}
    \frac{\eta((p^m\tau+1)/p^{k+2-m})}{\eta((p^m\tau+1)/p^{k-m})},
  $$
  whose leading Fourier coefficient is
  \begin{equation*}
  \begin{split}
    \frac1pe^{2\pi i(1-p^2)/24p^{k+2-m}}
   =\frac1pe^{-2\pi iab/p^{k+2-m}}.
  \end{split}
  \end{equation*}
  This completes the proof of the case of $m\ge n/2$.

  We next consider the case $\alpha_m=-1/p^m$ with $m<n/2$. Recall that
  the choice of $\sigma_{m}$ is given in
  \eqref{equation: choice of sigma}. Noticing that
  $$
    \eta(p^k\tau)\Big|\M0{-1}{p^n}0=\eta(-1/p^{n-k}\tau)
   =\sqrt{\frac{p^{n-k}\tau}i}\eta(p^{n-k}\tau),
  $$
  we have
  $$
    g_k(\tau)\Big|\sigma_{m}
   =\frac{\eta(p^{k+2}\tau)}{\eta(p^k\tau)}\Big|\M0{-1}{p^n}1
    \Big|\sigma_{n-m}
   =\frac1{pg_{n-k-2}(\tau)}\Big|\sigma_{n-m}.
  $$
  Thus, using the results in Part (1), we find that the leading
  coefficients of $g_k(\tau)$ at the cusp $\alpha_m=-1/p^m$ are
  $$
    \begin{cases}
    1/p, &\text{if }k\ge m, \\
    e^{2\pi iab/p}/\sqrt{p^\ast}, &\text{if }k=m-1, \\
    e^{2\pi iab/p^{m-k}} &\text{if }k\le m-2.
    \end{cases}    
  $$
  Finally, for the function $f(\tau)$, we have
  $$
    f(\tau) \Big|\sigma_{m}
   =\frac1{p^{12/(p-1,12)}}
    \left(\frac{\eta(p^{n-1}\tau)}{\eta(p^n\tau)}\right)^{24/(p-1,12)}
    \Big|\sigma_{n-m}.
  $$
  When $m\ge 1$, by Part (2) of Lemma \ref{lemma: eta at cusps},
  $$
    \left(\frac{\eta(p^{n-1}\tau)}{\eta(p^n\tau)}\right)^{24/(p-1,12)}
    \Big|\sigma_{n-m}
   =p^{12/(p-1,12)}\left(\frac{\eta((p^{n-m}\tau+1)/p^{m-1})}
    {\eta((p^{n-m}\tau+1)/p^m)}\right)^{24/(p-1,12)}.
  $$
  Thus, the leading coefficient of $f(\tau)$ at $\alpha_m$ is
  $$
    \left(e^{2\pi ip^{-m}(1-p)/24}\right)^{24/(p-1,12)}
   =e^{-2\pi ia/p^m}.
  $$
  When $m=0$, by Part (1) of Lemma \ref{lemma: eta at cusps},
  $$
    \left(\frac{\eta(p^{n-1}\tau)}{\eta(p^n\tau)}\right)^{24/(p-1,12)}
    \Big|\sigma_{n}
   =\left(e^{2\pi i(1-p)/24}\frac{\eta(p^{n-1}\tau)}{\eta(p^n\tau)}
    \right)^{24/(p-1,12)}.
  $$
  Thus, the leading coefficient of $f(\tau)$ at $\alpha_0$ is $p^{-12/(p-1,12)}$.
  This completes the proof of the proposition.
\end{proof}

%

\section{Proof of Theorem \ref{thm:main}}\label{sect:proof-pn}
\subsection{}
We keep to assume $N=p^n$, 
where $p \ge 5$ is a prime and $n$ is a positive integer.
As in \S \ref{sect:cuspidal-pn},
we write $P_i:=Q_{p^i}$ for the cusp of level $p^i$
for $i=0, \dots, n$ (see \S \ref{sect:cusps}).
The residue field of $P_i$
is given by 
\[\Q(P_i) = \Q(\zeta_{p^{d(i)}}),
\quad d(i)=\min(i, n-i).
\]
Our task is to show
the injectivity of the composition map
\begin{equation}\label{eq:to-be-inj}
 \CC(p^n) \hookrightarrow J(\Q)_{\Tor} \overset{\delta}{\to} 
\bigoplus_{i=0}^{n-1} \Q(P_i) \otimes \Q/\Z,
\end{equation}
where $\delta$ is the map from \eqref{eq:ex-seq1}.

\begin{lemma}\label{lem:def-s}
Let $p \geq 3$ be a prime and $m \in \Z_{>0}$.
Then the maps
\begin{gather*}
\Q/\Z \to \Q^\times \otimes \Q/\Z, ~ x \mapsto p \otimes x,
\\
\Q/\Z \to \Q(\zeta_{p^m})^\times \otimes \Q/\Z, 
~ x \mapsto \sqrt{p^*} \otimes x,
\end{gather*}
are (split) injections.
(See \eqref{def:p-ast} for the definition of $\sqrt{p^*}$.)
\end{lemma}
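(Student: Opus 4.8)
The plan is to handle the two maps separately, using throughout that $\Q/\Z$ is an injective $\Z$-module (it is divisible); consequently, as soon as either map is shown to be injective it is automatically split, since the identity of $\Q/\Z$ then extends along the injection to a retraction. For the first map this is immediate: the $p$-adic valuation gives a homomorphism $\ord_p\otimes\id\colon\Q^\times\otimes\Q/\Z\to\Z\otimes\Q/\Z=\Q/\Z$ with $(\ord_p\otimes\id)(p\otimes x)=x$, which is the desired retraction. The whole difficulty lies in the second map.

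For the second map I write $F=\Q(\zeta_{p^m})$ and $V=F^\times/\mu(F)$, a free abelian group; since $\mu(F)$ is finite, $F^\times\otimes\Q/\Z\cong V\otimes\Q/\Z$, and for $w\in V$ the kernel of $x\mapsto w\otimes x$ is exactly $\tfrac1d\Z/\Z$, where $d$ is the largest integer with $w\in dV$. Applying this to $w=[\sqrt{p^\ast}]$ reduces the claimed injectivity to the indivisibility statement
\[
\sqrt{p^\ast}\notin\mu(F)\,(F^\times)^\ell\qquad\text{for every prime }\ell .
\]
Because $\sqrt{p^\ast}$ is a square root of $p^\ast=(-1)^{(p-1)/2}p$, it is a unit away from the prime $\mathfrak p$ above $p$, where $\ord_{\mathfrak p}(\sqrt{p^\ast})=\phi(p^m)/2=v$; hence the condition is automatic unless $\ell\mid v=p^{m-1}(p-1)/2$, and only three kinds of primes $\ell$ remain.

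For an odd prime $\ell\neq p$ with $\ell\mid v$, all roots of unity are already $\ell$-th powers (as $\ell\nmid 2p^m$), so it suffices to rule out $\sqrt{p^\ast}\in(F^\times)^\ell$. Here I would first descend to $\Q(\zeta_p)$: applying $N_{F/\Q(\zeta_p)}$ to a hypothetical $\sqrt{p^\ast}=c^\ell$ and using $\gcd\bigl([F:\Q(\zeta_p)],\ell\bigr)=\gcd(p^{m-1},\ell)=1$ yields $\sqrt{p^\ast}\in(\Q(\zeta_p)^\times)^\ell$. Then I exploit that $\Gal(\Q(\zeta_p)/\Q)$ acts on the Gauss sum $\sqrt{p^\ast}$ through the quadratic character $\chi$: from $\sqrt{p^\ast}=c^\ell$ one gets $(\sigma(c)/c)^\ell=\chi(\sigma)\in\{\pm1\}$, and since $\mu_\ell\cap\Q(\zeta_p)=1$ while the only $\ell$-th root of $-1$ in $\Q(\zeta_p)$ is $-1$, this forces $\sigma(c)/c=\chi(\sigma)$ for all $\sigma$, hence $c\in\Q\sqrt{p^\ast}$. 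Writing $c=r\sqrt{p^\ast}$ with $r\in\Q^\times$ gives $1=r^\ell(p^\ast)^{(\ell-1)/2}$, which is impossible because $\ord_p$ of the right-hand side equals $\ell\,\ord_p(r)+(\ell-1)/2\notin\ell\Z$. For $\ell=2$ (which arises only when $p\equiv1\bmod4$, so $\sqrt{p^\ast}=\pm\sqrt p$) the same reduction leaves $\sqrt p\in(F^\times)^2$, i.e.\ $\Q(p^{1/4})\subseteq F$; but $\Q(p^{1/4})$ is not Galois over $\Q$, contradicting that $F$ is abelian.

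The remaining case $\ell=p$ (possible only for $m\geq2$) is where I expect the real obstacle, since now $\mu_p\subset F$ and the twisting root of unity cannot be dropped for free. The plan is: starting from $\sqrt{p^\ast}=\zeta_{p^m}^{\,j}c^p$, apply every $\sigma\in\Gal(F/\Q)$ to see $\sigma(c)/c\in\mu(F)$, so that $c^{2p^m}$ is Galois-invariant and hence lies in $\Q^\times$; evaluating $c^{2p^m}$ directly then forces $\zeta_p^{2j}=1$, whence $p\mid j$, $\zeta_{p^m}^{\,j}\in(F^\times)^p$, and we are back to $\sqrt{p^\ast}\in(F^\times)^p$. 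Writing $\sqrt{p^\ast}=c^p$ one finds $c^{2p}=p^\ast\in\Q^\times$, so $c$ generates a field of degree $2p$ (irreducibility of $X^p-\sqrt{p^\ast}$ over $\Q(\sqrt{p^\ast})$ follows from $\ord_{\mathfrak q}(\sqrt{p^\ast})=1$ there); since $\mu_p\not\subseteq\Q(c)$ for $p\geq5$ (because $p-1\nmid 2p$), this field is non-abelian over $\Q$ and cannot lie in $F$, the final contradiction. The hardest points are thus controlling the root-of-unity twist in this wild case and the non-abelian degree-$2p$ subfield; the value $p=3$, excluded from the principal application where $p\geq5$, would require a small separate check.
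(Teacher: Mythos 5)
Your proposal is correct in substance for $p\ge 5$, but it takes a genuinely different route from the paper's. The paper reduces everything to one computation: using Kummer theory in the colimit, $k^\times\otimes\Q/\Z\cong H^1(k,\Q/\Z(1))$, and inflation--restriction, the kernel of $\Q^\times\otimes\Q/\Z\to\Q(\zeta_{p^m})^\times\otimes\Q/\Z$ is identified with $H^1(G,\mu_{2p^m})$, $G=\Gal(\Q(\zeta_{p^m})/\Q)$; since $G$ is cyclic this has the same order as the Tate cohomology group $\hat{H}^0(G,\mu_{2p^m})$, which is $2$, and injectivity of $x\mapsto\sqrt{p^*}\otimes x$ then follows formally from the identity $\sqrt{p^*}\otimes 2x=p^*\otimes x$. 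You instead reduce injectivity to the indivisibility statement $\sqrt{p^*}\notin\mu(F)(F^\times)^\ell$ for every prime $\ell$, and settle the finitely many relevant $\ell$ by explicit algebraic number theory: the valuation at the ramified prime, the Gauss-sum Galois action together with norm descent to $\Q(\zeta_p)$, and non-normal subfields of an abelian field. Your route is more elementary (no Galois cohomology) and shows exactly which primes could obstruct; the price is a case analysis and the loss of the uniformity the cohomological argument gets for free. (One small imprecision: for $\ell=2$ your reduction yields $\pm\sqrt p\in(F^\times)^2$, not necessarily $\sqrt p\in(F^\times)^2$, since $-1$ is not the square of a root of unity in $F$; this is harmless, as any root $c$ of $X^4-p$ generates a non-normal quartic field, which cannot sit inside $F$.)

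There is, however, a genuine gap relative to the lemma as stated: it asserts the result for all $p\ge 3$, and your treatment of the wild case $\ell=p$ breaks down exactly at $p=3$. Your final step excludes $\Q(c)\subseteq F$, $c^p=\sqrt{p^*}$, by arguing $\mu_p\not\subseteq\Q(c)$ from $p-1\nmid 2p$; but $p-1\mid 2p$ when $p=3$, and a Galois field of degree $6$ containing $\mu_3$ is not absurd on degree grounds (a priori it could be $\Q(\zeta_9)$ itself). You flag this, but as written the proof does not cover the stated range of the lemma. The gap closes easily, in a way that also simplifies your argument for all $p$: from $\sqrt{p^*}=c^p$, square to get $(c^2)^p=p^*$, so $c^2$ is a root of $X^p-p^*$, which is Eisenstein at $p$ and hence irreducible over $\Q$; thus $\Q(c^2)$ is a degree-$p$ field that is not normal over $\Q$ (its normal closure $\Q((p^*)^{1/p},\zeta_p)$ has degree $p(p-1)>p$), whereas every subfield of the abelian field $F$ is normal over $\Q$. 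This contradiction requires no degree-$2p$ computation and works uniformly for all odd $p$, including $p=3$.
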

\begin{proof}
Splitting is automatic because $\Q/\Z$ is injective.
The first statement follows from the elementary fact that 
$\Q^\times$ is the direct sum of $\{ \pm 1\}$ and
the free abelian group on the set of all prime numbers.
From this, the second statement 
is reduced to showing
\begin{equation}\label{eq:p-st}
 \ker[\Q^\times \otimes (\Q/\Z)
 \to \Q(\zeta_{p^m})^\times \otimes (\Q/\Z)]
= \{ 0, p^* \otimes \frac{1}{2} \}.
\end{equation}
The right hand side is contained in the left,
because $\sqrt{p^*} \in \Q(\zeta_{p^m})$.
Thus it suffices to show the left hand side of
\eqref{eq:p-st} is of order $2$.
Put $\Q/\Z(1):=\mu(\ol{\Q})$.
In terms of Galois cohomology,
this group can be rewritten  as
\[ \ker[H^1(\Q, \Q/\Z(1))\to H^1(\Q(\zeta_{p^m}), \Q/\Z(1))],
\]
which is then identified with
\[ H^1(G, H^0(\Q(\zeta_{p^m}), \Q/\Z(1))),
\quad G=\Gal(\Q(\zeta_{p^m})/\Q)
\]
by the inflation-restriction sequence.
Note that
$H^0(\Q(\zeta_{p^m}), \Q/\Z(1))=\mu_{2p^m}$. 
Since $G \cong (\Z/p^m\Z)^\times$ is a cyclic group,
the order of 
$H^1(G, \mu_{2p^m})$
agrees with that of
the Tate cohomology
\[
\hat{H}^0(G, \mu_{2p^m}):=\frac{\{ x \in \mu_{2p^m} \mid \sigma(x)=x
~\text{for all}~ 
\sigma \in G \}}
{\{ \prod_{\sigma \in G}\sigma(x) \mid x \in \mu_{2p^m} \}}.
\]
Direct computation shows that
$\hat{H}^0(G, \mu_{2p^m})$ is of order two.
This completes the proof of the lemma.
\end{proof}

\subsection{}
Let $\Lambda_i$ be the subgroup of $\Q(P_i)^\times/\mu(\Q(P_i))$
generated by $p$ (resp. $\sqrt{p^*}$)
for $i=0$ (resp. for $i=1, \dots, n-1$).
We also let
\[ \Lambda := \bigoplus_{i=0}^{n-1} \Lambda_i 
\subset \bigoplus_{i=0}^{n-1} \Q(P_i)^\times/\mu(\Q(P_i)).
\]
Proposition \ref{prop:leading-coeff} and Lemma \ref{lem:evaluation-delta}
show that the map $\delta$ in \eqref{eq:to-be-inj}
factors as
\begin{equation}\label{eq:delta-factor}
\xymatrix{
\sC(p^n) 
\ar[r]^{\widetilde{\delta}}
\ar[rd]_{\eqref{eq:to-be-inj}}
& 
\Lambda \otimes \Q/\Z
\ar@{_{(}->}[d]
\\
& \bigoplus_{i=0}^{n-1} \Q(P_i)^\times \otimes \Q/\Z,
}
\end{equation}
where the right vertical injection is provided by Lemma \ref{lem:def-s}.
We are reduced to showing the injectivity of $\widetilde{\delta}$.

We choose a uniformizer $t_{P_i}$
described in Lemma \ref{lemma: uniformizers}
at each cusp $P_i$.
Using them,
we define a homomorphism
\begin{equation}\label{eq:Delta}
 \Delta : \sP(p^n) \to \Lambda
\end{equation}
by,
for any $h \in K^\times$ such that $\div(h)$ is supported on $D$,
\[ 
\Delta(\div(h)) =
\Big( (\frac{h}{t_{P_n}^{\ord_{P_n}(h)}})(P_n) 
(\frac{t_{P_i}^{\ord_{P_i}(h)}}{h})(P_i) \Big)_{i=0}^{n-1}
\in \Lambda \subset \bigoplus_{i=0}^{n-1} \Q(P_i)^\times/\mu(\Q(P_i)).
\]
Note that the image of $\Delta$ is contained in $\Lambda$
by 
Proposition \ref{prop:leading-coeff}.
Note also that,
unlike $\delta$, this map depends on
our choice of uniformizers $t_{P_i}$.

Recall that $a=(p-1)/(p-1, 12)$. Put $a'=12/(p-1, 12)$.

\begin{lemma}\label{eq:lemma-final}
\begin{enumerate}
\item 
The map $\Delta$ is injective.
\item 
The cokernel of $\Delta$ is a cyclic group of order $a'$
generated by the class of $\lambda := p \in \Lambda_0 \subset \Lambda$.
\item
There exist $c_0, \dots, c_{n-2} \in \Z$ such that
$a'\lambda = \Delta(\div(fg_0^{c_0} \dots g_{n-2}^{c_{n-2}}))$.
Here $f, g_0, \dots, g_{n-2}$ are functions introduced in 
Proposition \ref{proposition: generators of P}.
%
\end{enumerate}
\end{lemma}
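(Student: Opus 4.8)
The plan is to compute $\Delta$ as an explicit $n\times n$ integer matrix and to read off all three assertions from its shape. Both the source $\sP(p^n)$ and the target $\Lambda$ are free of rank $n$: the former has the basis $\div(f),\div(g_0),\dots,\div(g_{n-2})$ by Corollary \ref{cor:gen}(1), and the latter has the basis $\epsilon_0=p$ generating $\Lambda_0$ together with $\epsilon_i=\sqrt{p^*}$ generating $\Lambda_i$ for $1\le i\le n-1$ (each $\Lambda_i$ is free of rank one, since neither $p$ nor $\sqrt{p^*}$ is a root of unity). To describe $\Delta$ on these bases I would combine its defining formula with Proposition \ref{prop:leading-coeff}. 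The crucial simplification is that, by Proposition \ref{prop:leading-coeff}(1) with $m=n$, the leading coefficient of each of $f,g_0,\dots,g_{n-2}$ at $P_n$ equals $1$; hence the $i$-th component of $\Delta(\div h)$ is simply the inverse of the leading coefficient of $h$ at $P_i$, taken modulo $\mu(\Q(P_i))$. Reducing the formulas of Proposition \ref{prop:leading-coeff} modulo roots of unity — discarding every factor of the form $e^{2\pi i(\text{rational})}$ and using $p\equiv(\sqrt{p^*})^2$ modulo roots of unity (valid since $p^*=(-1)^{(p-1)/2}p$) — then turns each leading coefficient into an integer exponent.

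Carrying this out, I expect the matrix $A$ of $\Delta$ to take the following form. The row for $f$ is $(a',0,\dots,0)$, because at $P_0$ the leading coefficient is $p^{-a'}$ while at every $P_i$ with $i\ge 1$ it is a root of unity (or $1$). For each $g_k$ the entry in the $\Lambda_0$-column is $1$, while the entries in the columns $\Lambda_1,\dots,\Lambda_{n-1}$ follow the uniform pattern: exponent $2$ in column $i$ when $i\le k$, exponent $1$ when $i=k+1$, and $0$ when $i\ge k+2$ — the two regimes $i\ge n/2$ and $i<n/2$ of Proposition \ref{prop:leading-coeff} producing the same values after reduction. Thus the $(n-1)\times(n-1)$ block $B$ cut out by the rows $g_0,\dots,g_{n-2}$ and the columns $\Lambda_1,\dots,\Lambda_{n-1}$ is lower triangular with $1$'s on its diagonal, so $\det B=1$ and, expanding $A$ along its first row, $\det A=\pm a'$.

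With the matrix in hand the three assertions follow formally. For (1), $\det A=\pm a'\ne 0$, so $\Delta$ is injective. For (2), $\#\Coker(\Delta)=|\det A|=a'$; for the finer structure I would note that, since $\det B=1$, the sublattice $\Delta\langle g_0,\dots,g_{n-2}\rangle$ surjects onto the last $n-1$ coordinates of $\Lambda$, whence $\Lambda/\Delta\langle g_0,\dots,g_{n-2}\rangle$ is free of rank one generated by the image of $\epsilon_0=p=\lambda$; imposing the remaining relation $\Delta(\div f)=a'\epsilon_0$ collapses this to $\Z/a'\Z$ generated by $\lambda$. For (3), in that rank-one quotient the classes of $a'\lambda$ and of $\Delta(\div f)$ coincide, so $a'\lambda-\Delta(\div f)$ lies in $\Delta\langle g_0,\dots,g_{n-2}\rangle$ and can be written as $\sum_k c_k\Delta(\div g_k)$; equivalently $a'\lambda=\Delta(\div(fg_0^{c_0}\cdots g_{n-2}^{c_{n-2}}))$. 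In fact the explicit row for $f$ already equals $a'\epsilon_0$, so one may simply take all $c_k=0$.

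The main obstacle will be the bookkeeping in the first step: accurately reducing the case analysis of Proposition \ref{prop:leading-coeff} modulo roots of unity and checking that the two regimes $i\ge n/2$ and $i<n/2$ yield matching exponents, so that the block $B$ is genuinely triangular and the $f$-row has a single nonzero entry $a'$. Once that reduction is pinned down, the determinant computation and the cokernel analysis are routine linear algebra over $\Z$.
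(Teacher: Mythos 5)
Your proposal is correct and takes essentially the same route as the paper: both compute the matrix of $\Delta$ with respect to the basis $\div(f),\div(g_0),\dots,\div(g_{n-2})$ of $\sP(p^n)$ and the basis $p,\sqrt{p^*},\dots,\sqrt{p^*}$ of $\Lambda$ via Proposition \ref{prop:leading-coeff}, obtaining a lower-triangular matrix with diagonal $(a',1,\dots,1)$, from which all three claims follow by the linear algebra you describe. The only discrepancy is a global sign --- the paper prints the matrix with an overall minus sign --- and your positive version is in fact the one consistent with the definition of $\Delta$ (and it is what makes claim (3) hold with all $c_k=0$), so the paper's sign appears to be a typo.
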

\begin{proof}
Recall from Corollary \ref{cor:gen} (1)
that $\div(f), \div(g_0), \dots, \div(g_{n-2})$ 
form a $\Z$-basis of $\sP(p^n)$.
Let us take a $\Z$-basis of $\Lambda =\oplus_{i=0}^{n-1} \Lambda_i$
given by the generators 
$\lambda=p \in \Lambda_0$ and $\sqrt{p^*} \in \Lambda_i$ for $i=1, \dots, n-1$.
Then Proposition \ref{prop:leading-coeff} shows that
the map $\Delta$ is represented by
\[
-\begin{pmatrix}
a' & 0 & 0 & 0 &  \dots & 0
\\
1 & 1 & 0 & 0 &  \dots & 0
\\
1 & 2 & 1 & 0 &  \dots & 0
\\
1 & 2 & 2 & 1 &  \dots & 0
\\
\vdots &  &  &  &  & \vdots
\\
1 & 2 & \dots & 2 & 2 & 1
\end{pmatrix}
\]
from which the lemma follows.
\end{proof}

\subsection{Proof of Theorem \ref{thm:main}}

Recall from Lemma \ref{eq:lemma-final} (1)
that the map $\Delta$ defined in \eqref{eq:Delta} is injective.
Since $\PP(p^n)$ is of finite index in $\DD(p^n)$
(see \eqref{eq:def-pp}),
$\Delta$ has a unique extension 
\[ \widetilde{\Delta} : \DD(p^n) \to \Lambda \otimes \Q.
\]
which is also injective.
By Lemma \ref{lem:evaluation-delta},
we have a commutative diagram with exact rows
\[
\xymatrix{
0 \ar[r]
&
\sP(p^n) \ar[d]^{\Delta} \ar[r]
&
\sD(p^n) \ar[d]^{\widetilde{\Delta}} \ar[r]
&
\sC(p^n) \ar[d]^{\widetilde{\delta}} \ar[r]
&
0
\\
0 \ar[r]
&
\Lambda \ar[r]
&
\Lambda \otimes \Q \ar[r]
&
\Lambda \otimes \Q/\Z \ar[r]
&
0.
}
\]
where $\widetilde{\delta}$ is from \eqref{eq:delta-factor}.
We get an exact sequence
\begin{equation}\label{eq:snake}
 0 \to \ker(\widetilde{\delta}) 
\to \Coker(\Delta) \overset{\psi}{\to} \Coker(\widetilde{\Delta}).
\end{equation}
It remains to show $\psi$ is injective.
In view of Lemma \ref{eq:lemma-final} (2),
this amounts to showing that
the image of $\lambda=p \in \Lambda_0 \subset \Lambda$ 
in $\Coker(\widetilde{\Delta})$ has order $a'$.
Let $b>0$ be a divisor of $a'$ 
such that
the image of $b \lambda$ vanishes in $\Coker(\widetilde{\Delta})$.
This means that 
$\div(f g_0^{c_0} \dots g_{n-2}^{c_{n-2}}) = (a'/b) \sE$
for some $\sE \in \sD(p^n)$,
where $c_0, \dots, c_{n-2} \in \Z$ 
are taken from Lemma \ref{eq:lemma-final} (3).
Since $a$ and $a'$ are relatively prime to each other,
Corollary \ref{cor:gen} (2) shows that $b$ must be $a'$.
This completes the proof.
\qed

\section{The case of $N=pq$}\label{sect:level-pq}
\subsection{}
In this section, we present an outline of the proof of
Proposition \ref{prop:intro-pq}.
Since the proof goes similarly with Theorem \ref{thm:main},
we will be brief and omit details.
Let $N=pq$ where
$p, q$ are two distinct prime numbers such that $p \equiv q \equiv 1 \bmod 12$.
We use Takagi's result \cite[Theorem 5.1]{Takagi}
that determines the order of $\sC(N)$ for any square free $N$.
Here we state it in the special case $N=pq$, $p,q\equiv 1\mod 12$:
\begin{proposition}[Takagi]\label{prop:takagi}
The order of $\sC(pq)$ is given by $4abc$, where
\[ a=\frac{(p-1)(q+1)}{24}, ~
   b=\frac{(p+1)(q-1)}{24}, ~
   c=\frac{(p-1)(q-1)}{24}.
\]
\end{proposition}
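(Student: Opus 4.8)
The plan is to compute the order of $\sC(pq)$ as the index $[\sD(pq):\sP(pq)]$, mimicking the determinant calculation carried out in \S\ref{sect:pf-gen}. Since $pq$ is squarefree, for every divisor $d\mid pq$ one has $(d,pq/d)=1$, so all four cusps $Q_1,Q_p,Q_q,Q_{pq}$ are $\Q$-rational and $\sD(pq)$ is a free $\Z$-module of rank $3$. First I would produce a full-rank sublattice of $\sP(pq)$ by exhibiting eta-quotients $h=\prod_{\delta\mid pq}\eta(\delta\tau)^{r_\delta}$ that are modular units, and then express the order as a determinant.

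To this end I would determine the lattice $R$ of admissible exponent vectors $(r_1,r_p,r_q,r_{pq})\in\Z^4$ using Ligozat's criterion (Proposition \ref{proposition: conditions for eta}): the four conditions read $\sum_\delta r_\delta=0$, a square condition on $\prod_\delta\delta^{r_\delta}$, and the two congruences $\sum_\delta r_\delta\delta\equiv 0$ and $\sum_\delta r_\delta(pq/\delta)\equiv 0\bmod 24$. The hypothesis $p\equiv q\equiv 1\bmod 12$ is precisely what makes these congruences tractable, since then $(p-1,12)=(q-1,12)=12$ and $(p+1,12)=(q+1,12)=2$; I would use this to pin down $R$ as a rank-$3$ sublattice of $\{r:\sum_\delta r_\delta=0\}$ together with an explicit $\Z$-basis.

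Next, for each basis vector I would compute $\div h$ via Lemma \ref{lem:eta-prod-div}, using that $(d,pq/d)=1$ simplifies $a_{pq}(d,\delta)$ to $pq\,(d,\delta)^2/(d\delta)$. Assembling the resulting divisors into the rows of a matrix and noting that $r\mapsto\div h$ is injective on $R$ with image a finite-index subgroup of $\sD(pq)$, I would apply Lemma \ref{lemma: index of sublattice} (or a direct Smith normal form computation) to read off $[\sD(pq):\sP(pq)]$ as the relevant determinant divided by the column-sum normalization. A case-by-case simplification using the identity $(p-1,12)(p+1,12)=24$ and the congruence hypotheses should collapse this expression to $4abc$ with $a,b,c$ as stated.

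The main obstacle I anticipate is correctly isolating the lattice $R$: the square condition together with the two mod-$24$ congruences interact subtly, and obtaining the exact index---in particular the leading factor of $4$ rather than $1$ or $2$---requires tracking the $2$-adic and $3$-adic contributions with care. A second point that must be addressed is that the divisors of eta-quotients generate all of $\sP(pq)$, not merely a finite-index subgroup; for squarefree level this is known from the structure of modular units on $X_0(N)$ (Ligozat, Kubert--Lang), and I would cite it rather than reprove it.
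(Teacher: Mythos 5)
Your outline inverts the paper's logic, and the step you propose to handle by citation is precisely where it breaks down. The lattice computation you describe only ever yields an upper bound: since the eta-quotient divisors form a subgroup of $\sP(pq)$, showing that this subgroup has index $4abc$ in $\sD(pq)$ shows only that the order of $\sC(pq)$ \emph{divides} $4abc$. Equality requires the completeness statement that every modular unit on $X_0(pq)$ is a constant times an eta-quotient, and that statement is not in the sources you name: Ligozat \cite{Ligozat} proves the admissibility criterion (Proposition \ref{proposition: conditions for eta}) and treats genus-one levels, while Kubert--Lang determine the unit groups of $X(N)$ and $X_1(N)$, not $X_0(N)$. Descending their theory to $\Gamma_0(N)$ --- where invariance of a Siegel-unit product holds only up to roots of unity, and the orbit structure and torsion ambiguities must be controlled --- is exactly the technical content of Takagi's paper \cite{Takagi}, i.e.\ of the theorem you are trying to prove; so as written your argument is circular. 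Note also that the paper offers no proof to compare against: Proposition \ref{prop:takagi} is quoted from \cite[Theorem 5.1]{Takagi}, and the paper then runs your implication in the \emph{opposite} direction, using Takagi's order to conclude that $f_1,f_2,f_3$ generate $\sP(pq)$ --- the same logical pattern as Proposition \ref{proposition: generators of P}, whose proof leans on Ling's class number formula from Theorem \ref{thm:c-and-j}(4).

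The computational half of your plan, on the other hand, is correct and is easier than you fear; no delicate $2$-adic or $3$-adic bookkeeping arises. With $p\equiv q\equiv 1\bmod 12$ and $r=(r_1,r_p,r_q,r_{pq})$ satisfying $\sum_\delta r_\delta=0$, condition (2) of Proposition \ref{proposition: conditions for eta} says that $r_p+r_{pq}$ and $r_q+r_{pq}$ are even, and conditions (3) and (4) then hold automatically: for instance $\sum_\delta r_\delta\delta\equiv(p-1)(r_p+r_{pq})+(q-1)(r_q+r_{pq})\bmod 24$, and $12$ times an even number vanishes mod $24$. Hence the admissible lattice $R$ is cut out by two parity conditions, has index $4$ in the sum-zero lattice, and coincides with the span of the exponent vectors $(1,-1,1,-1)$, $(1,1,-1,-1)$, $(1,-1,-1,1)$ of $f_1,f_2,f_3$; by Lemma \ref{lem:eta-prod-div} the corresponding divisors are $a(D_1-D_2+D_3)$, $b(D_1+D_2-D_3)$, $c(D_1-D_2-D_3)$, whose index in $\sD(pq)$ is $abc$ times the absolute determinant of the sign matrix, i.e.\ $4abc$. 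To upgrade the resulting divisibility to Takagi's equality you would need an independent lower bound --- in effect, an independent proof that eta-quotients exhaust the modular units of $X_0(pq)$, via the Siegel-unit descent Takagi carries out or some other argument --- and neither the paper nor your cited references supplies one.
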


\subsection{Proof of Proposition \ref{prop:intro-pq}}
There are exactly four cusps on $X_0(pq)$,
all of which are $\Q$-rational.
Their levels are $1, p, q, pq$ (see \S \ref{sect:cusps}).
We give their names as follows:
\[ C_0(pq)=\{ P_0=Q_1,~ P_1=Q_p, P_2=Q_q, P_3=Q_{pq} \} \]
so that we have a $\Z$-basis of $\DD(pq)$ given by
\[ D_1=P_0-P_3, ~D_2=P_1-P_3, ~D_3=P_2-P_3. \]
Using Proposition \ref{prop:takagi},
one sees that
the group $\PP(pq)$ is generated by the divisors of
\[
f_1 = \frac{\eta(\tau)\eta(q\tau)}{\eta(p\tau)\eta(pq\tau)}, ~~
f_2 = \frac{\eta(\tau)\eta(p\tau)}{\eta(q\tau)\eta(pq\tau)}, ~~
f_3 = \frac{\eta(\tau)\eta(pq\tau)}{\eta(p\tau)\eta(q\tau)}
\]
by an argument similar to Proposition \ref{proposition: generators of P}.
Lemma \ref {lem:eta-prod-div} shows that
the divisors of the functions $f_1, f_2, f_3$ are respectively given by
\[ 
a(D_1-D_2+D_3),~~
b(D_1+D_2-D_3),~~
c(D_1-D_2-D_3).
\]
Let $m \in \{ 1, p, q, pq \}$ and put $m'=pq/m$.
Choose integers $b$ and $d$ such that $dm'-bm=1$.
Put
\[
\sigma_m:=\begin{pmatrix} m' & -b \\ pq & dm' \end{pmatrix}.
\]
Then $\sigma_m$ normalizes $\Gamma_0(pq)$ and
satisfies $\sigma_m \infty=1/m$.
We may take $e^{2 \pi i \sigma_m^{-1}\tau}$
as a local uniformizer at the cusp of level $m$
(cf. Lemma \ref{lemma: uniformizers}).
With this choice, the leading coefficients of $f_1, f_2, f_3$, up to
$\pm 1$ signs, are given by the following table:
\[
\begin{array}{|c|cccc|} \hline
 & P_0 & P_1 & P_2 & P_3 
\\
\hline
f_1 & p & 1 & p & 1  \\
f_2 & q & q & 1 & 1  \\
f_3 & 1 & 1 & 1 & 1 \\
\hline
\end{array}
\]
Finally, by using Lemma \ref{lem:evaluation-delta},
we find the kernel of 
$\sC(pq) \hookrightarrow J_0(pq)(\Q)_{\Tor} \overset{\delta}{\to}$
is a cyclic group of order $c$ generated by the class of $D_1-D_2-D_3$.
\qed

\bibliographystyle{plain}

\end{document}